\def\boxit#1{\vbox{\hrule\hbox{\vrule\kern6pt
          \vbox{\kern6pt#1\kern6pt}\kern6pt\vrule}\hrule}}
\def\var{\hbox{var}}
\def\bse{\begin{eqnarray*}}
\def\ese{\end{eqnarray*}}
\def\be{\begin{eqnarray}}
\def\ee{\end{eqnarray}}
\def\bq{\begin{equation}}
\def\eq{\end{equation}}
\def\bse{\begin{eqnarray*}}
\def\ese{\end{eqnarray*}}
\newcommand{\bbN}{\mathbb{N}}
\newcommand{\bbE}{\mathbb{E}}
\newcommand{\bbF}{\mathbb{F}}
\newcommand{\bbC}{\mathbb{C}}
\newcommand{\bbJ}{\mathbb{J}}
\newcommand{\bR}{\mathbf{R}}
\newcommand{\bI}{\mathbf{I}}
\newcommand{\bG}{\mathbf{G}}
\newcommand{\bW}{\mathbf{W}}
\newcommand{\bP}{\mathbf{P}}
\newcommand{\bE}{\mathbf{E}}
\newcommand{\bJ}{\mathbf{J}}
\newcommand{\bQ}{\mathbf{Q}}
\newcommand{\bS}{\mathbf{S}}
\newcommand{\bV}{\mathbf{V}}
\newcommand{\bx}{\mathbf{x}}
\newcommand{\by}{\mathbf{y}}
\newcommand{\bv}{\mathbf{v}}
\newcommand{\bz}{\mathbf{z}}
\newcommand{\bff}{\mathbf{f}}
\newcommand{\bqq}{\mathbf{q}}
\newcommand{\bw}{\mathbf{w}}
\newcommand{\bg}{\mathbf{g}}
\newcommand{\bbf}{\mathbf{f}}
\newcommand{\btheta}{\boldsymbol{\theta}}
\newcommand{\balpha}{\boldsymbol{\alpha}}
\newcommand{\0}{\mathbf{0}}
\newcommand{\mcA}{{\mathcal A}}
\newcommand{\mcB}{{\mathcal B}}
\newcommand{\mcE}{{\mathcal E}}
\newcommand{\mcF}{\mathcal{F}}
\newcommand{\mcI}{{\mathcal I}}
\newcommand{\mcP}{{\mathcal P}}
\newcommand{\mcS}{\mathcal{S}}
\newcommand{\ii}{\mathbf{i}}
\newcommand{\pp}{\mathbf{p}}
\newcommand{\mm}{\mathbf{m}}
\newcommand{\oone}{\boldsymbol{1}}
\newcommand{\Nset}{\mathbb{N}_0}
\newcommand{\bbNset}{{\mathbb N}}
\newcommand{\Pol}{\mathbb{P}}
\newcommand{\lv}{w}
\newcommand{\Real}{\mathop{\text{\rm Re}}}
\newcommand{\Imag}{\mathop{\text{\rm Im}}}
\newcommand{\func}{u}
\DeclareMathOperator*{\esssup}{ess\,sup}
\newcommand{\sJ}[1]{
\begin{bmatrix*}[r]
  \bJ^{#1}_R   & -\bJ^{#1}_I  \\
  \bJ^{#1}_{I}  & \bJ^{#1}_R   \\
\end{bmatrix*}
}
\newcommand{\szv}[1]{
\begin{bmatrix}
  \bz^{#1}_{R} \\
  \bz^{#1}_{I} \\
\end{bmatrix}
}
\newcommand{\sfv}[1]{
\begin{bmatrix}
  \bbf^{#1}_{R} \\
  \bbf^{#1}_{I} \\
\end{bmatrix}
}
\def\R{\Bbb{R}}
\newtheorem{theo}{Theorem}
\newtheorem{prop}{Proposition}
\newtheorem{asum}{Assumption}
\theoremstyle{remark}
\newtheorem{rem}{Remark}
\begin{document}

\title[Analytic regularity stochastic collocation of Newton iterates]
      {Analytic regularity and stochastic collocation of high
  dimensional Newton iterates.}

\author{Julio E. Castrill\'on-Cand\'as} 
\email{jcandas@bu.edu}

\thanks{This material is based upon work supported by the National
  Science Foundation under Grant No. 1736392.
Research reported in this technical report was supported in part by the
National Institute of General Medical Sciences (NIGMS) of the
National Institutes of Health under award number 1R01GM131409-01.}

\author{Mark Kon}
\address{Department of Mathematics and Statistics \\ Boston University
  \\ 111 Cummington Mall, Boston MA 02215} \email{mkon@bu.edu}

\subjclass[2010]{65C20,65J15}

\maketitle

\begin{abstract}
In this paper we introduce concepts from uncertainty quantification
(UQ) and numerical analysis for the efficient evaluation of stochastic 
high dimensional Newton iterates. In particular, we develop complex
analytic regularity theory of the solution with respect to the random
variables. This justifies the application of sparse grids for the
computation of stochastic moments. Convergence rates are derived
and are shown to be subexponential or algebraic with respect to the
number of realizations of random perturbations. Due the accuracy of
the method, sparse grids are well suited for computing low probability
events with high confidence.  We apply our method to the power flow
problem.  Numerical experiments on the 39 bus New England power system
model with large stochastic loads are consistent with the theoretical
convergence rates.
\end{abstract}

\keywords{
Uncertainty Quantification, Newton-Kantorovich Theorem, Sparse Grids,
Approximation Theory, Complex Analysis, Power Flow
}



\section{Introduction}

Newton iteration is a powerful method for solving many scientific
and engineering problems, naturally arising in the
context of power flow problems \cite{Bergen2000}, non-linear Partial
Differential Equations \cite{Holst1994}, among others.


Computational predictions often form bases for critical decisions in
many areas, including engineering, financial markets, weather
forecasting and disaster management. The rapid development of computer
hardware has allowed simulation of more and more complex
phenomena. But how reliable are these predictions? Can they be
trusted? Rigorous uncertainty quantification (UQ) methods have become
a basic tool for assessing the validity of such computational
predictions.


Uncertainty quantification is a process in which uncertainties in a
system are characterized and propagated into the calculation of a
given Quantity of Interest (QoI). The characterization of
uncertainties arises as a problem often in the solution of so-called
inverse problems, while uncertainty propagation is often associated
with the solution of forward problems. An additional goal of UQ is to
allow for reduction of uncertainties in a QoI by improving the
collection of new data, or by directly improving the physical
understanding itself.  Performing UQ for complex applications such as
electric power grids is a daunting task. This is mainly due to the
fact that it can be very computationally challenging. Effective UQ
processes should be systematic, computationally efficient, and
reliable.

One of the most widely used UQ techniques is the Monte Carlo method
\cite{Fishman1996}, which is robust and easy to implement. Indeed, a
deep analysis or understanding of the underlying stochastic model is
not required, making this an attractive approach for the practicing
engineer and scientist. However, convergence rates for iterative
approximation methods can be very slow. For the application of UQ to
the power flow problem, due to the large numbers of generators, loads
and transmission lines one potentially faces, the problem can be high
dimensional, non-linear, non-Gaussian , and not feasible with current
computational resources. An alternative approach is the use of tensor
product methods. However, these methods suffer significantly from the
{\it curse of dimensionality}, thus making them unattractive even for
moderate dimensionalities.


If the regularity of a QoI is relatively high with respect to the
fundamental random variables, then application of stochastic
collocation with Smolyak sparse grids
\cite{Smolyak63,nobile2008a,nobile2008b} is a good choice. Indeed,
this method has become popular in the field of computational applied
mathematics and engineering as a surrogate model of stochastic Partial
Differential Equations (sPDEs) \cite{Castrillon2016} where the QoI is
composed of moderately large numbers of random variables. The method
is easy to implement and non-intrusive, i.e., each collocation point
corresponds to uncoupled deterministic problems.  The stochastic
collocation method can be used with non-linear dependence of the QoI
on the random variables. However, such grids still suffer from the
curse of dimensionality.  Alternative adaptive techniques have been
developed, including anisotropic sparse grids \cite{nobile2008b},
dimension adaptive quadrature \cite{Gerstner2003} and quasi-optimal
sparse grids \cite{Beck2014,Nobile2016}.  Yet these methods are still
not feasible for very high dimensional problems and/or low regularity
of the QoI. In addition, although quasi-optimal sparse grids lead to
exponential convergence rates with respect to numbers of realizations,
there is to our knowledge still no systematic way to
construct them.


We have particular interest in the application of the Newton iteration
to the solution of the  {\it power flow} equations of  electric grids 
\cite{Bergen2000}. In 
practice many of the generators (wind, solar, etc)  and loads are stochastic in 
nature, and thus a traditional deterministic {\it power flow} analysis is insufficient.
UQ applied to mathematical/statistical modeling of electrical power grids is still in its infancy. A major 2016 National Academy of
Sciences report underscores the importance of this new area of
research in its potential to contribute to this and the next
generation of electric power grids \cite{NAP21919}. 
The incorporation  of uncertainty in the grid has gathered interest in
the power system community. 

In \cite{Hockenberry2004} Hockenberry et
al. proposed the probabilistic collocation method (PCM) with
applications to power systems. Although the results of this work are
good, the uncertainty is computed only with respect to a single
parameter. In \cite{Prempraneerach2010} the authors test a polynomial
chaos collocation and Galerkin approach to study the uncertainty of
power flow on a small 2-bus power system. The results are good for low
stochastic dimensions, but it is not clear how this approach will
scale for large electric power grids, with higher associated
dimensions. Moreover, there is little mathematical theory on the
effectiveness of this approach.

More recently, in \cite{Tang2015}, Tang et al. proposed a
dimension-adaptive sparse grid method \cite{Gerstner2003} by using the
off-the-shelf Matlab Sparse Grid Toolbox \cite{spalg,spdoc}. The
numerical results show the feasibility of this approach.  However,
there are also some weaknesses. The authors did not analyze the
regularity of the power flow with respect to the uncertainty, so that
the rate of convergence of the sparse grid is not known.  Reduced
regularity of the stochastic power flow can choke the accuracy. In
contrast to Monte Carlo methods, which are robust, sparse grid methods
are sensitive to the regularity of the function at hand. Lack of
regularity will lead to erroneous results.

This motivates the application of numerical analysis and UQ theory to
the Newton iteration for such problems as the electric power
grid. Many of the ideas of this paper originate from the numerical
solution of stochastic PDEs
\cite{Castrillon2016,nobile2008a,nobile2008b}, where UQ is having a
large impact. The goals of our paper is to integrate these methods
with the theory and practice of power systems, where we believe that UQ
will eventually have a strong impact. Furthermore, from the numerical
analysis perspective regularity can be determined by complex analytic
extensions of the functions of interest
\cite{Castrillon2016,babusk_nobile_temp_10,Trefethen2012}. These lead
to sharper convergence rates than regularity in terms of derivatives.

In Section \ref{background} the mathematical background for this paper
is introduced. In particular, the Newton-Kantorovich Theorem, sparse
grids and convergence rates are discussed. Furthermore, complex
analysis from the numerical analysis perspective for polynomial
approximation is also treated.  In Section \ref{Analyticity} the
complex analytic regularity of the solution of the Newton iteration is
developed with respect to the random perturbations. In Section
\ref{Applications} the theory developed in Section \ref{Analyticity}
justifies the application of sparse grids to the power flow equations. 
The sparse grids are applied 
to the power flow equations of the 39 bus, 10 Generator, New England model. 
Subexponential or algebraic
convergence rates are obtained than coincide with the sparse grid
convergence rates. These convergence rates make the sparse grid method
suitable for computing stochastic moments to high
accuracy. Furthermore, they will also be well suited for computing
small event tail probabilities with high accuracy.

\section{Mathematical background}
\label{background}

In this section we introduce the general notation and mathematical
background that will be used in this paper. We provide a summary of
the three important topics: i) Newton-Kantorovich Theorem, ii)
Stochastic spaces and iii) Sparse grids (approximation theory).

\subsection{Newton-Kantorovich Theorem}


Consider the Fr\'{e}chet differentiable operator $\bbf:X \rightarrow
Y$ that maps a convex open set $D$ of a Banach space $X$ into a Banach
space $Y$.  Suppose that we are interested in finding an element $\bx
\in D$ such that
\begin{equation}
\bbf(\bx) = \0.
\label{back:eqn1}
\end{equation}
Assuming that a solution of equation \eqref{back:eqn1} exists, a
series of successive approximations $\bx^{v} \in D$, where $v \in
\bbN_{0} := \bbN \cup {0}$, can be built. Consider the space of
bounded linear operators $L(Y,X)$ from $Y$ into $X$.  Let
$\bJ(\bx^{v})$ be the Fr\'{e}chet derivative of
$\bbf(\bx^{v})$. Suppose that $\bJ(\bx^{v})^{-1} \in L(Y,X)$ and
consider the sequence
\[
\bx^{v+1} = \bx^{v} - \bJ(\bx^v)^{-1}\bbf(\bx^v).
\]

\begin{asum}
  We assume that that $\bJ$ satisfies the Lipschitz condition
\begin{equation}
  \| \bJ(\bx) - \bJ(\by) \| \leq \lambda \| \bx - \by
  \|
  \label{newton:eqn2}
  \end{equation}
for some constant $\lambda \geq 0$, and all $\bx,\by \in D$. Furthermore assume that $\bx^0 \in D$ and
there exist positive constants $\varkappa$ and $\delta$ such that
\begin{equation}
  \| \bJ(\bx^0)^{-1} \| \leq \varkappa,
  \| \bJ(\bx^0)^{-1} \bbf(\bx^0)\| \leq \delta,
  h := 2 \varkappa \lambda \delta \leq 1,
  \label{newton:eqn3}
  \end{equation}
  $U(\bx^{0},t^{*}) \subset D$, with $U(\bx,r)$ the open
ball $\{\by:\|\by - \bx\| \leq r \}$ and
$t^{*} = \frac{2}{h}( 1 -
  \sqrt{1 - h})\delta$.
  \label{regions:condition1}
\end{asum}
If Assumption \ref{regions:condition1} above is satisfied, then by the Newton-Kantorovich theorem \cite{Argyros2008},
for all $v \in \bbN_{0}$,
\begin{enumerate}
\item The Newton iterates $\bx^{v+1} = \bx^{v} -
  \bJ(\bx^v)^{-1} \bbf(\bx^{v})$ and $\bJ(\bx^v)^{-1}$ exist.
\item $\bx^v \in U(\bx_0,t^{*}) \subset D$.
\item $\bx^* = \lim_{v \rightarrow \infty} \bx^v$ exists, $\bx^* \in
  \overline{U(\bx^0,t^{*})}$, and $\bbf(\bx^{*}) = \0$ uniquely.
  
\end{enumerate}

\subsection{Stochastic spaces}

Let $\Omega$ be the set of outcomes from the complete probability
space $(\Omega, \mcF, \mathbb{P})$, where $\mcF$ is a sigma algebra of
events and $\mathbb{P}$ is a probability measure.  Define
$L^{q}_{\Pol}(\Omega)$, $q \in [1, \infty]$, as the Banach spaces
\[
L^{q}_{\Pol}(\Omega) := \left\{u:\Omega\rightarrow \R \,\,|\,\, \int_{\Omega}
|\func(\omega)|^{q}\,\mbox{d}\Pol(\omega) < \infty \right\}\,\mbox{and}\,
L^{\infty}_{\Pol}(\Omega) := \left\{u:\Omega\rightarrow\R
\,\,|\,\, \Pol\mbox{-} \esssup_{ \omega \in \Omega}
|\func(\omega)| < \infty \right\}.
\]
\noindent   
Let $\bW:=[W_1, \dots, W_{N}]$ be an
$N$-component random vector measurable in $(\Omega, \mcF, \mathbb{P})$
that takes values on $\Gamma:=\Gamma_{1} \times \dots \times
\Gamma_{N} \subset \mathbb{R}^{N}$, with $\Gamma_n:=[-1,1]$. Let
$\mcB(\Gamma)$ be the Borel $\sigma$-algebra.  Define the induced
measure $\mu_{\bW}$ on $(\Gamma,\mcB(\Gamma))$ as $\mu_{\bW}(A) : =
\mathbb{P}(\bW^{-1}(A))$ for all $A \in \mcB(\Gamma)$. Assuming that
the induced measure is absolutely continuous with respect to 
Lebesgue measure on $\Gamma$, there exists a density
function $\rho(\bqq): \Gamma \rightarrow [0, +\infty)$ such that for
  any event $A \in \mcB(\Gamma)$
\[
\mathbb{P}(\bW \in A) := \mathbb{P}(\bW^{-1}(A)) = \int_{A} \rho( \bqq
)\,\mbox{d} \bqq.
\]
\noindent Now, for any measurable function $\bW \in
          [L^{1}_{\mathbb{P}} (\Gamma)]^{N}$ define the expected value
          as
\[
\mathbb{E}[\bW] = \int_{\Gamma} \bqq \, \rho( \bqq )\, \mbox{d}\bqq.
\]
Define also the Banach spaces (for $q \geq 1$)
\[
\begin{split}
L^{q}_{\rho}(\Gamma) 
&:= \left\{ u:\Gamma \rightarrow \R \,\,|\,\, \int_{\Omega}
|\func(\bqq)|^{q}\, \rho(\bqq) \,\mbox{d}\bqq
< \infty \right\} \,\,\mbox{and}\,\,\,
L^{\infty}_{\rho}(\Gamma) 
:= \left\{u:\Gamma \rightarrow \R  \,\,|\,\, \rho\mbox{-}\esssup_{ \bqq \in \Gamma}
|\func(\bqq)| < \infty \right\}.
\end{split}
\]
Note that the above $\rho$-essential supremium is
with respect to the measure induced by the density function $\rho(\cdot)$, rather than Lebesgue measure itself.

In general the density $\rho(\cdot)$ will not factorize into independent
probability density functions, making higher dimensional manipulations difficult in some cases.  In \cite{babusk_nobile_temp_10} the authors
recommend use of an auxiliary probability density function
$\hat\rho:\Gamma\rightarrow \R^+$ that factorizes into $N$ independent
ones, i.e.,
\begin{equation}\label{eq:rhohat}
  \hat\rho(\bqq) = \prod_{n=1}^{N} \hat\rho(q_n), \;\;\; \bqq=(q_1,\ldots,q_N)\in\Gamma,
\end{equation}
where we will assume that $\left\|\frac{\rho(\bqq)}{\hat
  \rho(\bqq)}\right\|_{L^\infty(\Gamma) }<\infty$. Note that in
contrast $L^\infty(\Gamma)$ (without a subscript) is with respect to the supremium in the
Lebesgue measure.

\subsection{Sparse grids}
\label{sparsegrids}

Our objective is to efficiently approximate a function $\func: \Gamma
\rightarrow \R$ defined on high dimensional domains using global
polynomials. The accuracy of the approach will directly depend on the
regularity of the function.  Let ${ \mcP_{\pp}(\Gamma)} \subset
L^2(\Gamma)$ be the span of tensor product polynomials of degree at
most $ {\pp = (p_1,\ldots,p_N)}$; i.e., $ \mcP_{\pp}(\Gamma) =
\bigotimes_{n=1}^{N}\;\mcP_{ p_n}(\Gamma_{n})$ with $\mcP_{
  p_n}(\Gamma_{n}):=\text{\rm span} (q_n^m,\,m=0,\dots,p_n),$ $\quad
n=1,\dots,N$.
For univariate polynomial approximation, we define a sequence of levels $i=0,1,2,3,\ldots$
corresponding to increasing degrees $m(i)\in \bbN_0$ polynomial approximation for given coordinates. Here for a 
given approximation scheme, $m(\cdot)$ is a fixed function. In general our multivariate 
approximation scheme will assume different levels of approximation $i_n$ for different 
coordinates $n=1,\ldots,N$.

We consider separate univariate Lagrange interpolants 
in $\Gamma$ along each dimension $n$, given as ${\mathcal I}^{m(i_n)}_{n}:C^{0}(\Gamma_n)
\rightarrow {\mathcal P}_{m(i_n)-1}(\Gamma_n)$. Specifically, let
\begin{equation}
{\mathcal I}^{m(i_n)}_{n}(\func(q_n)):=
\sum_{j_n=1}^{m(i_n)}\func(q_{j_n}^{i})l_{n,j_n}(q_n),
\label{sparsegrid:eqnm1}
\end{equation}
where $\{ l_{n,j} \}_{j=1}^{m(i_n)}$ is a Lagrange basis for the space
${\mathcal P}_{p_n}(\Gamma_{n})$, the set $\{q^{i}_{j_n}\}_{j_n=1}^{m(i_n)}$, represents $m(i_n)$ discrete locations (the interpolation
knots) in $\Gamma_n$, the index $i_n \geq 0$ is the level of approximation, and
$m(i_n) \in \bbN_{+}$ is the number of collocation nodes at level $i_n
\in \bbN_{+}$ where $m(0) = 0$, $m(1) = 1$ and $m(i_n) \leq m(i_n+1)$
if $i_n \geq 1$.

\begin{rem} Since $m(i_n)$ represents the number of interpolation
  knots for the Lagrange basis, we have $p_n = m(i_n) + 1$.
  \end{rem}

One of the most common approaches to constructing Lagrange
interpolants in high dimensions is the formation of tensor products of ${\mathcal
  I}^{m(i_n)}_n$ along each dimension $n$. However, as $N$ increases
the dimension of $\mcP_p$ increases as $\prod_{n=1}^N (p_n+1)$.  Thus
even for moderate dimensions $N$ the computational cost of a Lagrange
approximation becomes intractable.  However, in the case of sufficient
complex analytic regularity of the function $\func$ with respect to
the random variables defined on $\Gamma$, the application of Smolyak
sparse grids is better suited. In the rest of this section the
construction of the classical Smolyak sparse grid (see e.g.
\cite{Smolyak63,Novak_Ritter_00}) is summarized. More details can be
found in \cite{Back2011}.

Consider the difference operator along the $n^{th}$ dimension given by
\begin{equation}
  {\Delta_n^{m(i_n)} :=} \mcI_n^{m(i_n)}-\mcI_n^{m(i_n-1)}.
\label{sparsegrid:eqn0}
\end{equation}
Given an integer $w \geq 0$, called the approximation level, and a
multi-index $\ii=(i_1,\ldots,i_{N})$ $\in \bbNset^{N}_0$, let
$g:\bbNset^{N}_0 \rightarrow \bbNset$ be a strictly increasing
function in each argument and define a sparse grid approximation of
function $\func(\bqq) \in C^{0}(\Gamma)$, restricted in order by $g$:

\begin{equation}
  \mcS^{m,g}_{w}[\func(\bqq)]
=
 \sum_{\ii\in\bbNset^{N}_0: g(\ii)\leq w} \;\;
 \bigotimes_{n=1}^{N} ({\Delta_n^{m(i_n)}})(\func(\bqq)). 
\label{sparsegrid:eqn1}
\end{equation}



From the previous expression, the sparse grid approximation is
obtained as a linear combination of full tensor product
interpolations.  However, the constraint $g(\ii)\leq w$ in
\eqref{sparsegrid:eqn1} restricts the use of tensor grids of high
degree.

Let $\mm(\ii) = (m(i_1),\ldots,m(i_{N}))$ and consider the set of
polynomial multi-degrees
\[
\Lambda^{m,g}(w) = \{\pp\in\bbNset^{N}, \;\;  g(\mm^{-1}(\pp+\oone))\leq w\},
\]
where $\oone$ is an $N$ dimensional vector of ones. Denote by
$\Pol_{\Lambda^{m,g}(w)}(\Gamma)$ the corresponding multivariate
polynomial space spanned by the monomials with multi-degree in
$\Lambda^{m,g}(w)$, i.e.
\[
\Pol_{\Lambda^{m,g}(w)}(\Gamma) = \text{\rm span}
\left\{\prod_{n=1}^{N}
q_n^{p_n}, \;\; \text{with } \pp\in\Lambda^{m,g}(w)\right\}.
\]

For a Banach space $V$ let
\[
C^{0}(\Gamma; V) : = \{ \func: \Gamma \rightarrow V\,\, \mbox{is
  continuous on $\Gamma$ and } \max_{y\in \Gamma} \|\func(y)\|_{V} <
\infty \}.
\]
It can be shown that the approximation formula given by
$\mcS^{m,g}_{w}$ is exact in $\Pol_{\Lambda^{m,g}(w)}(\Gamma)$. We
state the following proposition that is proved in \cite{Back2011}.
\begin{prop}
\label{prop1}
\
\begin{itemize} 
\item[a)] For any $\func \in C^0(\Gamma;V)$, we have
  $\mcS_{w}^{m,g}[\func]\in \Pol_{\Lambda^{m,g}(w)}\otimes V$.
\item[b)] Moreover, $\mcS_{w}^{m,g}[\func] = \func  \;\; \forall
  \func \in\Pol_{\Lambda^{m,g}(w)}\otimes V$.
\end{itemize}
\end{prop}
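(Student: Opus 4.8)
The plan is to prove both parts by reducing to monomials via linearity and then exploiting two monotonicity facts built into the construction: that $g$ is strictly increasing in each argument, and that the knot-count function $m$ is non-decreasing. Throughout I would write $Y(w) := \{\ii\in\bbN_0^N : g(\ii)\le w\}$ for the index set appearing in \eqref{sparsegrid:eqn1}, and read $\mm^{-1}$ coordinatewise through the generalized inverse $m^{-1}(\ell) := \min\{i\in\bbN_0 : m(i)\ge \ell\}$, which is well defined and non-decreasing for $\ell\ge 1$ even when $m$ has plateaus. For a monomial $q^{\pp} := \prod_{n=1}^N q_n^{p_n}$ I would set $\bk := \mm^{-1}(\pp+\oone)$, i.e.\ $k_n = m^{-1}(p_n+1)$, so that membership $\pp\in\Lambda^{m,g}(w)$ is precisely the condition $g(\bk)\le w$, that is $\bk\in Y(w)$. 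The first thing to record is that $Y(w)$ is downward closed: if $\jj\le\ii$ componentwise and $\ii\in Y(w)$, then coordinatewise monotonicity of $g$ gives $g(\jj)\le g(\ii)\le w$, so $\jj\in Y(w)$.

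For part (a) I would start from the stated mapping property $\mcI_n^{m(i_n)}:C^0(\Gamma_n)\to\mcP_{m(i_n)-1}(\Gamma_n)$, so that each difference $\Delta_n^{m(i_n)}=\mcI_n^{m(i_n)}-\mcI_n^{m(i_n-1)}$ also maps into $\mcP_{m(i_n)-1}(\Gamma_n)$ (the larger degree, since $m$ is non-decreasing). Hence every summand $\bigotimes_{n=1}^N\Delta_n^{m(i_n)}$ with $\ii\in Y(w)$ sends $C^0(\Gamma;V)$ into $\big(\bigotimes_{n=1}^N\mcP_{m(i_n)-1}(\Gamma_n)\big)\otimes V$, which is spanned by monomials $q^{\pp}\otimes v$ with $p_n\le m(i_n)-1$. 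For such $\pp$ the inequality $p_n+1\le m(i_n)$ yields $k_n=m^{-1}(p_n+1)\le i_n$, hence $\bk\le\ii$ and, by monotonicity of $g$, $g(\bk)\le g(\ii)\le w$; thus $\pp\in\Lambda^{m,g}(w)$. Since $\Pol_{\Lambda^{m,g}(w)}\otimes V$ is a vector space containing every summand, it contains $\mcS_w^{m,g}[\func]$, giving (a).

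For part (b) I would again reduce to a single monomial $q^{\pp}$ with $\pp\in\Lambda^{m,g}(w)$ (the $V$-valued case following by tensoring with a fixed $v\in V$). Because the operators act on separate coordinates, each summand factorizes as $\prod_{n=1}^N\Delta_n^{m(i_n)}(q_n^{p_n})$. The first key step is the vanishing $\Delta_n^{m(i_n)}(q_n^{p_n})=0$ for $i_n>k_n$: then $i_n-1\ge k_n$ forces $m(i_n-1)\ge p_n+1>p_n$, so both interpolants reproduce $q_n^{p_n}$ and the difference is zero. This truncates the sum to indices $\ii\le\bk$:
\[
\mcS_w^{m,g}[q^{\pp}]=\sum_{\ii\in Y(w),\ \ii\le\bk}\ \prod_{n=1}^N\Delta_n^{m(i_n)}(q_n^{p_n}).
\]
The second key step is to drop the constraint $\ii\in Y(w)$: because $\pp\in\Lambda^{m,g}(w)$ gives $\bk\in Y(w)$ and $Y(w)$ is downward closed, the entire box $\{\ii:\0\le\ii\le\bk\}$ lies in $Y(w)$, so the sum runs over the full product set $\prod_{n=1}^N\{0,\dots,k_n\}$. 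It then factorizes, and each univariate factor telescopes, using $m(0)=0$ with $\mcI_n^{0}=0$:
\[
\sum_{i_n=0}^{k_n}\Delta_n^{m(i_n)}(q_n^{p_n})=\mcI_n^{m(k_n)}(q_n^{p_n})=q_n^{p_n},
\]
where the last equality holds because $m(k_n)\ge p_n+1$ makes $\mcI_n^{m(k_n)}$ reproduce $q_n^{p_n}$. Multiplying over $n$ gives $\mcS_w^{m,g}[q^{\pp}]=q^{\pp}$, which is (b).

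I expect the crux to be the passage in (b) from the coupled constraint $g(\ii)\le w$ to the full product box, since that is the single point where the two monotonicity hypotheses must work together: exact reproduction by the interpolants truncates each univariate sum from above at $k_n$, while downward closedness of $Y(w)$ guarantees the resulting box sits inside $Y(w)$, removing the coupling and permitting factorization and telescoping. Everything else --- the precise reading of $\mm^{-1}$ as a generalized inverse when $m$ has plateaus, and the base-level conventions $m(0)=0$ and $\mcI_n^{0}=0$ that close the telescoping at its bottom end --- I regard as routine bookkeeping once these two structural facts are isolated.
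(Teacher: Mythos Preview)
The paper does not prove this proposition itself; it simply states it and cites \cite{Back2011} for the proof. Your argument is correct and is in fact the standard one found in that reference: part (a) follows from the mapping property of each univariate interpolant together with coordinatewise monotonicity of $g$, and part (b) follows from the telescoping identity after observing that downward closedness of the index set $Y(w)$ lets you replace the constraint $g(\ii)\le w$ by the full box $\{\ii:\ii\le\bk\}$ once the high-level differences have been killed by exact reproduction. Your handling of the conventions $m(0)=0$, $\mcI_n^{0}=0$, and the generalized inverse $m^{-1}$ for non-strictly-increasing $m$ is exactly what is needed to make the bookkeeping close up.
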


\begin{rem} The tensor product space $\Pol_{\Lambda^{m,g}(w)}\otimes V$
  is more easily understood as the space of polynomials with
  Banach-valued coefficients. Furthermore, $\mcS_{w}^{m,g}[\func]\in
  \Pol_{\Lambda^{m,g}(w)}\otimes V$ is interpreted as a sparse grid
  approximation of a $V$-valued continuous function.
  \end{rem}

A good choice of $m$ and $g$ is given by the Smolyak sparse grid definitions (see
\cite{Smolyak63,Novak_Ritter_00})
\[
m(i_n) = \begin{cases} 1 & \text{for } i_n=1 \\ 2^{i_n-1}+1 &
  \text{for } i_n>1 \end{cases}\quad \text{ and } \quad g(\ii) =
\sum_{n=1}^N (i_n-1).
\]
Furthermore $ \Lambda^{m,g}(w):=\{\pp\in\bbNset^{N}: \;\; \sum_n
f(p_n) \leq w \}$ where
\[
    f(p_n) = \begin{cases}
      0, \; p_n=0 \\
      1, \; p_n=1 \\
      \lceil \log_2(p_n) \rceil, \; p_n \geq 2
    \end{cases}.
\]
Other common choices are shown in Table \ref{multivariate:table2}.
\begin{table}[htp]
\begin{center}
 \begin{tabular}{c c c}
    \hline
    Approx. space  & sparse grid: \;\; $m$, $g$	     & polynomial space: \;\; $\Lambda(w)$ \\
    \hline 
    Total          & $m(i_n)=i_n$ 			     &$\{\pp\in\Nset^{N}: \;\; \sum_n p_n \leq w\}$ \\
    Degree (TD)    & $g(\ii) = \sum_n(i_n-1) \leq w$ &	\\
    & & \\
    Hyperbolic 	   &	$m(i)=i$ 		     &	$\{\pp\in\Nset^{N}:$ \\
    Cross (HC)     &	$g(\ii) = \prod_n(i_n) \leq w+1$ & $\prod_n(p_n+1) \leq w+1\}$ \\
    \hline
    \end{tabular} 
\end{center}
\caption{Sparse grid approximations formulas for TD and HC.}
\label{multivariate:table2}
\end{table}

This choice of $m$ and $g$ combined with the choice of Clenshaw-Curtis
abscissas (which are locations of interpolation points given as
extrema of Chebyshev polynomials) leads to nested sequences of one
dimensional interpolation formulas and a sparse grid with a highly
reduced number of nodes compared to the corresponding tensor
grid. Another good choice includes Gaussian abscissas
\cite{nobile_tempone_08}. For any choice of $m(i_n) > 1$ the
Clenshaw-Curtis abscissas are given by
\[
q^{i_n}_{j_n} = -\cos \left( \frac{\pi(j_n-1)}{m(i_n) - 1} \right),\,\, j_n =
1,\dots, m(i_n).
\]
In Figure \ref{sparsegrid:fig1} an example of Clenshaw Curtis and
Gaussian abscissas are shown for $w = 5$.
\begin{figure}[hb]
  \begin{center}
    \begin{tikzpicture}[thick,scale=0.9, every node/.style={scale=0.9}]

      \node[inner sep=0pt] (russell) at (0,0)
           {\includegraphics[width=0.99\textwidth,clip,
               trim=1cm 3.25cm 1cm 2.85cm]
    {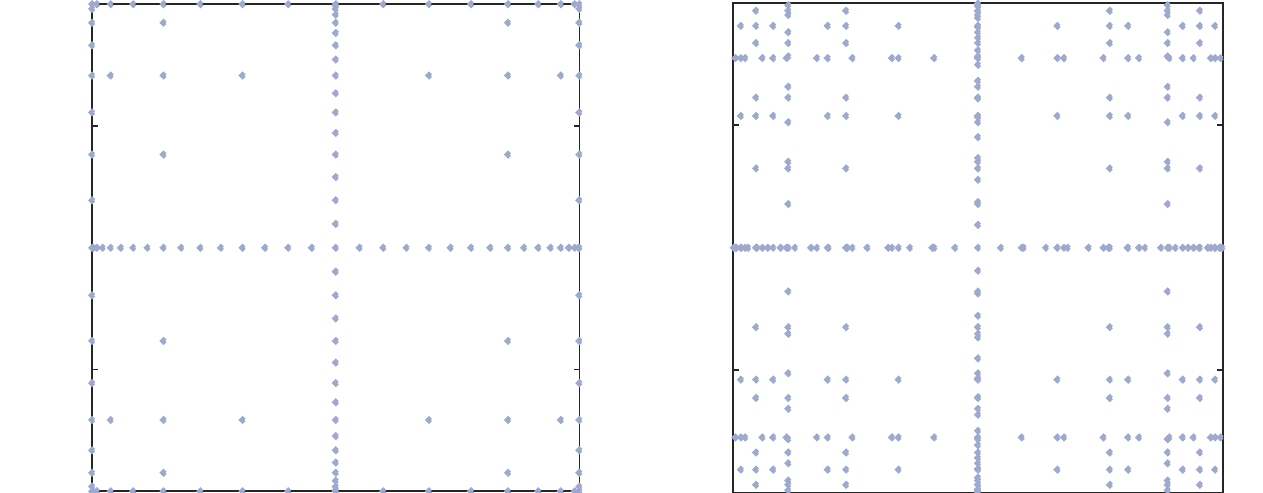}};

      \node [] at (-4.25,-4.5) {(a) Clenshaw Curtis};
      \node [] at (4.75,-4.5) {(b) Gaussian};
    \normalsize
    \end{tikzpicture}
  \end{center}

  \caption{Clenshaw Curtis (left) and Gaussian abscissas (right) for
    $w = 5$ levels.}
  \label{sparsegrid:fig1}
\end{figure}

As previously pointed out, the probability density function $\rho$ does
not necessarily factorize in higher dimensions. As an alternative we
use the auxiliary distribution $\hat \rho$, which factorizes as
$\hat\rho(\bqq) = \prod_{n=1}^{N} \hat\rho_{n}(q_n)$ and is close to
the original distribution $\rho(\bqq)$.  Suppose $k$ is a given global index determined by 
the set of indices $k_1 \dots, k_N$ as $k = k_1 + p_1(k_2-1) + p_1p_2(k_3-1) +
p_1p_2p_3(k_4 - 1) + \dots$. Given a function $u:\Gamma \rightarrow
V$, the quadrature scheme $\mathbb{E}_{\hat\rho}^{\pp}[u]$ that
approximates the integral $\mathbb{E}[u(\bqq)]:= \int_{\Gamma}
u(\bqq) \hat \rho(\bqq) \,d \bqq$ can now be computed based on the distribution $\hat\rho(\bqq)$ 
as
\[
   \mathbb{E}_{\hat\rho}^{\pp}[u] = \sum_{k=1}^{N_{\pp}} \omega_k
   u(\bqq^{(k)}), \quad \omega_k = \prod_{n=1}^N \omega_{k_n}
   \quad
   \omega_{k_n} = \int_{\Gamma_{n}}
   l_{n,k_n}^2(q_n)\hat\rho_n(q_n)\,\mbox{d}q_n,
   \]
   with $\bqq^{(k)} \in \R^{n}$ the locations of the quadrature
   knots, and $N_\pp \in \mathbb{N}$ the number of Gauss quadrature
   points controlling the accuracy of the quadrature scheme.  Recall that $l_{n,k_n}^2(q_n)$ define Lagrange polynomials defined in equation \eqref{sparsegrid:eqnm1}.  The
   term $\mathbb{E}[\func(\bqq)]$ can be approximated as
\[
\mathbb{E}[\mcS^{m,g}_{\lv}[\func(\bqq)]] \approx
\mathbb{E}^{\pp}_{\hat{\rho}}[\mcS^{m,g}_{\lv} [\func(\bqq)] \frac{\rho}{\hat{\rho}}],
\label{sparsegrid:eqn2} 
\]
and similarly the variance $\var[\func(\bqq)]$ is approximated as
\[
\begin{split}
\var[\func(\bqq)] 
& \approx \mathbb{E}[ (\mcS^{m,g}_{\lv} [\func(\bqq)])^2 ] 
- \mathbb{E}[\mcS^{m,g}_{\lv} [\func(\bqq)]]^{2} \\
& \approx
\mathbb{E}^{\pp}_{\hat{\rho}}
       [ (\mcS^{m,g}_{\lv} [\func(\bqq)])^2 \frac{\rho}{\hat{\rho}}] 
       - \mathbb{E}^{\pp}_{\hat{\rho}}
       [\mcS^{m,g}_{\lv} [\func(\bqq)] \frac{\rho}{\hat{\rho}}]^{2}.
\end{split}
\]
\begin{rem}
The weights $\omega_{k_n}$ and node locations $\bqq^{(k)}$ are
computed from the auxiliary density $\hat \rho$. For standard
distributions of $\hat \rho$ such as uniform and Gaussian, these are
already tabulated to full accuracy. Otherwise they must be
computed by solving for roots of orthogonal polynomials and using a
quadrature scheme. However, the integrals involved are only one dimensional. 
See \cite{babusk_nobile_temp_10} (Section 2) for details.
\end{rem}

We now develop some rigorous numerical bounds for the accuracy of the
sparse grid approximation.  Let $C^{k}_{\rm mix}(\Gamma;
\R)$ denote the space of functions with continuous mixed derivatives up to degree $k$:

\[
C^k_{\rm mix}(\Gamma; \R) = \left\{ u:\Gamma \rightarrow \R :
\frac{\partial^{\alpha_1, \dots, \alpha_N} u}{\partial^{\alpha_1} q_1
  \dots \partial^{\alpha_N} q_N} 
\in C^0(\Gamma; \R),\,n = 1,\dots,N,\,
\alpha_{n} \leq k \right\}
\]
and equipped with the following norm:
\[
\|\func\|_{C^k_{\rm mix}(\Gamma; \R)} = \left\{\func:\Gamma \rightarrow \R: 
\max_{\bqq \in \Gamma}
\left|
\frac{\partial^{\alpha_1, \dots, \alpha_N} \func(\bqq)}{\partial^{\alpha_1} q_1
  \dots \partial^{\alpha_N} q_N}    
\right| < \infty
\right\}.
\]
Assume that $\func\in C^{k}_{\rm mix}(\Gamma;\R)$.
In \cite{Novak_Ritter_00} the authors show that it must follow that 
\[
\|\func - \mcS^{m,g}_{w}[\func]\|_{L^{\infty}(\Gamma)}
\leq C(k,N)  \|\func\|_{C^k_{\rm mix}(\Gamma)}
\eta^{-k} (\log{\eta})^{(k+2)(N-1)+1},
\]
where $\eta$ is the number of knots of the sparse grid $\mcS^{m,g}_{w}$.
However, the coefficient $C(k,N)$ is in general not known
\cite{Novak_Ritter_00}.

If the function $\func$ admits a complex analytic extension, a better
approach for deriving error bounds for the polynomial approximation arises
from exploitation of analysis in the complex plane. In \cite{nobile2008a} the authors derive
$L^{\infty}_{\rho}(\Gamma)$ bounds based on analytic extensions of
$\func$ on a well defined region $\Psi \subset \bbC^{N}$ with respect
to the variables $\bqq$. These bounds are explicit, and the
coefficients can be estimated and depend on the size of the region
$\Psi$.

In \cite{nobile2008b,nobile2008a} the authors derive error estimates
for isotropic and anisotropic Smolyak sparse grids with
Clenshaw-Curtis and Gaussian abscissas, with error $\|\func -
\mcS^{m,g}_{w}[\func]\|_{L^{\infty}_{\rho}(\Gamma)}$ exhibiting
algebraic or sub-exponential convergence with respect to the number of
collocation knots $\eta$ (see Theorems 3.10, 3.11, 3.18 and 3.19 in
\cite{nobile2008a} for more details). However, for these estimates to
be valid, $\func \in C^{0}(\Gamma,\R)$ has to admit an analytic
extension to the region defined by the following polyellipse in
$\bbC^{N}$: ${\mathcal E}_{\hat \sigma_1, \dots, \hat \sigma_{N}} : =
\Pi_{n=1}^{N}{\mathcal E}_{n,\hat \sigma_n} \subset \Psi$, where
\[
\begin{split}
  {\mathcal E}_{n,\hat \sigma_n} &= \left\{ z \in \bbC \mathrm{\ with}\,\Real{z} =
  \frac{e^{\delta_n} + e^{-\delta_n} }{2}\cos(\theta),\,\,\,\Imag{z} =
  \frac{e^{\delta_n} - e^{-\delta_n}}{2}\sin(\theta): \theta \in
       [0,2\pi) , \hat \sigma_n \geq \delta_{n} \geq 0 \right\}
  \end{split}
\]
and $\hat \sigma_n > 0$ (see the Bernstein ellipse in Figure
\ref{erroranalysis:sparsegrid:polyellipse}).
\begin{figure}[htp]
\begin{center}
\begin{tikzpicture}
    \begin{scope}[font=\scriptsize]



      
    \filldraw[fill=blue!40, 
      semitransparent] (0,0) ellipse (2 and 1);

    \draw [->] (-2.5, 0) -- (2.5, 0) node [above left]  {$\Real $};



    \draw [->] (0,-1.5) -- (0,1.5) node [below right] {$\Imag$};
    \draw (1,-3pt) -- (1,3pt)   node [above] {$1$};
    \draw (-1,-3pt) -- (-1,3pt) node [above] {$-1$};
    \end{scope}
    
    \node [below right] at (1.50,1.25) {${\mathcal E}_{n,\hat \sigma_n}$}; 
\end{tikzpicture}
\end{center}
\caption{Bernstein ellipse along the $n^{th}$ dimension. The ellipse crosses
  the real axis at $\frac{e^{\hat \sigma_n} + e^{-\hat \sigma_n}}{2}$
  and the imaginary axis at $\frac{e^{\hat \sigma_n} - e^{-\hat
      \sigma_n}}{2}$.}
\label{erroranalysis:sparsegrid:polyellipse}
\end{figure}
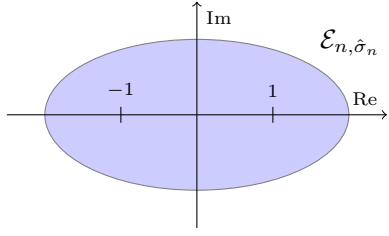


We now recall the definition of Chebyshev polynomials, useful in deriving error estimates on sparse grids.  Let
$T_k:\Gamma_{1} \rightarrow \R$, $k = 0, 1, \dots$, be a $k^{th}$
order Chebyshev polynomial over $[-1,1]$. These polynomials are defined
recursively as:
\[
\begin{split}
T_0(y) = 1,  T_1(y) = y, \dots,
T_{k+1}(y) = 2yT_{k}(y) - T_{k-1}(y), \dots \end{split}\ . 
\]
The following theorem characterizes  approximation of analytic
functions using Chebyshev polynomials.
\begin{theo} Let $\func$ be analytic and absolutely
  bounded by $M$ on ${\mathcal E}_{\log \zeta}$, $\zeta > 1$. Then the
  expansion
\[
\func(y) = \alpha_0 + 2\sum_{k = 1}^{\infty} \alpha_{k} T_{k}(y), 
\]
holds for all $y \in {\mathcal E}_{\log \zeta}$ where
\[
\alpha_k = \frac{1}{\pi} \int_{-1}^{1} \frac{\func(y)T_k(y)}{1 - y^2}\, dy
\]
and, additionally, $|\alpha_k| \leq M / \zeta^k$. Furthermore if $y \in
[-1,1]$ then
\[
|\func(y) - \alpha_0  - 2\sum_{k = 1}^{m} \alpha_{k} T_{k}(y)|
\leq 
\frac{2M}{\zeta - 1} \zeta^{-m}.
\]
\label{errorestimates:theorem}
\end{theo}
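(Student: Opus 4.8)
The plan is to pass to the Joukowski (Zhukovsky) coordinate $z$ defined by $y = \tfrac{1}{2}(z + z^{-1})$, which maps the annulus $\zeta^{-1} < |z| < \zeta$ two-to-one onto the interior of the Bernstein ellipse ${\mathcal E}_{\log\zeta}$, with the circle $|z| = \zeta$ mapping onto its boundary (since here $\delta = \log\zeta$ gives $e^{\delta} = \zeta$, matching the semi-axes in the definition of ${\mathcal E}_{n,\hat\sigma_n}$). This coordinate is the right one because of the identity $T_k\!\left(\tfrac{1}{2}(z+z^{-1})\right) = \tfrac{1}{2}(z^k + z^{-k})$, which I would verify by induction from the recurrence $T_{k+1} = 2yT_k - T_{k-1}$ with $T_0 = 1$, $T_1 = y$ (equivalently from $T_k(\cos\theta) = \cos k\theta$ with $z = e^{i\theta}$). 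Thus a Chebyshev series in $y$ is exactly a symmetric Laurent series in $z$.

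First I would set $g(z) := \func\!\left(\tfrac{1}{2}(z+z^{-1})\right)$. Since $\func$ is analytic and bounded by $M$ on ${\mathcal E}_{\log\zeta}$ and the Joukowski map is holomorphic and nonsingular on the open annulus, $g$ is holomorphic on $\zeta^{-1} < |z| < \zeta$ and admits a Laurent expansion $g(z) = \sum_{k=-\infty}^{\infty} c_k z^k$. The map is invariant under $z \mapsto z^{-1}$, so $g(z) = g(z^{-1})$, forcing $c_{-k} = c_k$. Regrouping and inserting the $T_k$ identity gives $g(z) = c_0 + 2\sum_{k\geq 1} c_k T_k(y)$, i.e. the claimed expansion with $\alpha_k = c_k$. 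To recover the integral formula for $\alpha_k$, I would evaluate the Laurent coefficient $c_k = \tfrac{1}{2\pi i}\oint_{|z|=1} g(z)\,z^{-k-1}\,dz$ on the unit circle $z = e^{i\theta}$, where $y = \cos\theta$, reduce it to $\tfrac{1}{2\pi}\int_{-\pi}^{\pi}\func(\cos\theta)\cos k\theta\,d\theta$ using evenness, and substitute $y = \cos\theta$, so that $T_k(y) = \cos k\theta$ and $d\theta = -dy/\sqrt{1-y^2}$; this produces the stated coefficient integral with the Chebyshev weight $(1-y^2)^{-1/2}$.

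The bound $|\alpha_k| \le M\zeta^{-k}$ is then the standard Cauchy estimate for Laurent coefficients: for any radius $r$ with $1 < r < \zeta$ one has $c_k = \tfrac{1}{2\pi i}\oint_{|z|=r} g(z)\,z^{-k-1}\,dz$, whence $|c_k| \le r^{-k}\max_{|z|=r}|g(z)| \le M\,r^{-k}$, because the image of $|z| = r$ is the confocal ellipse with parameter $\log r < \log\zeta$, which lies inside ${\mathcal E}_{\log\zeta}$ where $|\func| \le M$; letting $r \uparrow \zeta$ gives the claim. Finally, for the truncation estimate on $y \in [-1,1]$ I would use $|T_k(y)| \le 1$ there (immediate from $T_k(\cos\theta) = \cos k\theta$) to obtain
\[
\Bigl|\func(y) - \alpha_0 - 2\sum_{k=1}^{m}\alpha_k T_k(y)\Bigr| = \Bigl|2\sum_{k=m+1}^{\infty}\alpha_k T_k(y)\Bigr| \le 2\sum_{k=m+1}^{\infty} M\zeta^{-k} = \frac{2M}{\zeta-1}\,\zeta^{-m},
\]
the last equality being the sum of a geometric series.

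The main obstacle, and essentially the only delicate point, is justifying that the symmetric Laurent series for $g$ may be regrouped into the Chebyshev series and that the manipulations converge up to the closed curve $y \in [-1,1]$. This is controlled precisely by the geometric coefficient decay $|\alpha_k| \le M\zeta^{-k}$ established above, which forces the tail $2\sum_{k>m}\alpha_k T_k(y)$ to converge normally (hence uniformly) on compact subsets of the closed ellipse, so term-by-term rearrangement and the passage between the two expansions are legitimate.
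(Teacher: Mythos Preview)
Your argument is correct and is the classical proof via the Joukowski transplant $y=\tfrac12(z+z^{-1})$ and Cauchy estimates on the annulus; the paper itself does not give a proof but simply refers to Theorem~8.2 in Trefethen's \emph{Approximation Theory and Approximation Practice}, where essentially this same argument appears. One small remark: the coefficient formula in the statement has $1-y^2$ in the denominator, but your derivation correctly produces the Chebyshev weight $(1-y^2)^{-1/2}$, which is the standard (and correct) normalization.
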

\begin{proof}
See Theorem 8.2 in \cite{Trefethen2012}
\end{proof}
  
We follow the arguments in \cite{nobile_tempone_08, babusk_nobile_temp_10} and using the fact that the
interpolation operator ${\mathcal I}^{m(i_n)}_{n}$ is exact on the space
${\mathcal P}_{p_n - 1}$, i.e. for any $v \in {\mathcal P}_{p_n - 1}$ 
we have that ${\mathcal I}^{m(i_n)}_{n}(v) = v$, it can be shown that if $\func$ 
is continuous on
$[-1,1]$ and has an analytic extension on ${\mathcal E}_{\sigma_n}$ we
have, from Theorem \ref{errorestimates:theorem},
\[
\|(I - {\mathcal I}^{m(i_n)}_{n})\func\|_{L^{\infty}_{\rho}(\Gamma_n)} \leq
(1 + \Lambda_{m(i)})
\min_{v \in {\mathcal P}_{m(i_n) - 1}}  \| u - v\|_{C^{0}(\Gamma,\R)}
\leq
(1 + \Lambda_{m(i)})
\frac{2M(\func)}{e^{\sigma_n} - 1} e^{-\sigma_nm(i_n)},
\]
where $\Lambda_{m(i_n)}$ is the Lebesgue constant and is bounded by
$2\pi^{-1}(\log{(m-1)} + 1)$ (see \cite{babusk_nobile_temp_10}) and $M
= M(u)$ is the maximal value of $u$ on ${\mathcal E}_{\sigma_n}$ .  Thus,
for $n = 1,\dots,N$ we have
\begin{equation}
\begin{split}
\|(I - {\mathcal I}^{m(i)}_{n})\func\|_{L^{\infty}_{\rho}(\Gamma_n)
}
&\leq
M(\func)
C(\hat \sigma_n)
i_n e^{-\sigma_n 2^{i_n}}
\end{split},
\label{errorestimates:eqn3}
\end{equation}
where $\sigma_n = \frac{\hat \sigma_n}{2} > 0$ and $C(\sigma_n) :=
\frac{2}{(e^{ \sigma_n} - 1)}$. Recalling the definition of
${\Delta_n^{m(i_n)}}$ from equation \eqref{sparsegrid:eqn0}, we
have that for all $n = 1,\dots,N$
\begin{equation}
\begin{split}
  \| \Delta(\func)^{m(i_n)} \|_{L^{\infty}_{\rho}
    (\Gamma_n)} 
&=
\|
({\mathcal I}^{m(i_n)}_{n} - {\mathcal I}^{m(i_n-1)}_{n})\func
\|_{L^{\infty}_{\rho}(\Gamma_n)}
\leq
\|(I - {\mathcal I}^{m(i_n)}_{n})\func\|_{L^{\infty}_{\rho}(\Gamma_n)} \\
&+
\|(I - {\mathcal I}^{m(i_n-1)}_{n})\func\|_{L^{\infty}_{\rho}(\Gamma_n)}
\leq
2 M(\func)
C(\sigma_n)
i_n e^{-\sigma_n 2^{i_n-1}}.
\end{split}
\label{errorestimates:eqn4}
\end{equation}
By applying equation \eqref{errorestimates:eqn4} to Lemma 3.5 in \cite{nobile2008a},
we are now in a position to slightly modify Theorems 3.10 and 3.11 in \cite{nobile2008a}
and restate them into a single theorem given below. However, the following assumptions and definitions
are first needed:

\begin{itemize}

\item We set $\hat \sigma \equiv \min_{n=1,\dots,N} \hat \sigma_n,$ i.e. for an
  isotropic sparse grid the overall asymptotic sub-exponential decay
  rate $\hat \sigma$ will be dominated by the smallest $\hat
  \sigma_n$.

\item Let
\[
\tilde{M}(\func) = \sup_{\bg \in {\mathcal E}_{\hat \sigma_1, \dots, \hat
    \sigma_{N}}} |\func(\bg)|,
\]
$\sigma = \hat{\sigma}/2$, $\mu_1 = \frac{\sigma}{1 + \log (2N)}$, and
$\mu_2(N) = \frac{\log(2)}{N(1 + \log(2N))}$,
\[
a(\delta,\sigma):=
\exp{
\left(
\delta \sigma \left\{
\frac{1}{\sigma \log^{2}{(2)}}
+ \frac{1}{\log{(2)}\sqrt{2 \sigma}}
+ 2\left( 1 + \frac{1}{\log{(2)}} 
\sqrt{ \frac{\pi}{2\sigma} }
\right)
\right\}
\right)
},
\]
\[
\begin{split}
\tilde{C}_{2}(\sigma) &= 1 + \frac{1}{\log{2}}\sqrt{
\frac{\pi}{2\sigma}}
,\,\,\delta^{*}(\sigma) = \frac{e\log{(2)} - 1}{\tilde{C}_2
  (\sigma)}, \,\,C_1(\sigma,\delta,\tilde M(\func))
=
\frac{4\tilde{M}(\func)
C(\sigma)a(\delta,\sigma)}{
  e\delta\sigma},
\end{split}
\]
$\mu_3 = \frac{\sigma \delta^{*} \tilde C_2(\sigma)}{1 + 2 \log
  (2N)}$, and
\[{\mathcal Q}(\sigma,\delta^{*}(\sigma),N, \tilde M(\func)) = 
\frac{ C_1(\sigma,\delta^{*}(\sigma),\tilde M(\func))}{\exp(\sigma
  \delta^{*}(\sigma) \tilde{C}_2(\sigma) )}
\frac{\max\{1,C_1(\sigma,\delta^{*}(\sigma),\tilde M(\func))\}^{N}}{|1
  - C_1(\sigma,\delta^{*}(\sigma),\tilde M(\func))|}.
\]
\end{itemize}

\begin{theo}
Suppose that $\func \in C^{0}(\Gamma;\R)$ has an analytic extension on
${\mathcal E}_{\hat \sigma_1, \dots, \hat \sigma_{N}}$ and is absolutely
bounded by $\tilde M(\func)$.  If $w > N / \log{2}$ and a sparse grid
with Clenshaw-Curtis abscissas is used, then the following bound is
valid:
\begin{equation}   
\|\func - \mcS^{m,g}_{w}\func
\|_{L^{\infty}_{\rho}(\Gamma)} \leq 
{\mathcal
  Q}(\sigma,\delta^{*}(\sigma),N, \tilde M(\func))
\eta^{\mu_3(\sigma,\delta^{*}(\sigma),N)}\exp
  \left(-\frac{N \sigma}{2^{1/N}} \eta^{\mu_2(N)} \right)
, \\
\label{erroranalysis:sparsegrid:estimate}
\end{equation}
Furthermore, if $w \leq N / \log{2}$ then the following algebraic
convergence bound holds:
\begin{equation}
  \begin{split}
    \| \func - \mcS^{m,g}_{w}\func \|_{L^{\infty}_{\rho}(\Gamma)}
    &\leq 
\frac{C_1(\sigma,\delta^{*}(\sigma),\tilde M(\func))}{
|1 - C_1(\sigma,\delta^{*}(\sigma),\tilde M(\func))|
} \max{\{1,C_{1}(\sigma,\delta^{*}(\sigma),\tilde{M}(\func))
\}}^N
\eta^{-\mu_1}.
\end{split}
\label{erroranalysis:sparsegrid:estimate2}
\end{equation}
\label{erroranalysis:theorem1}
\end{theo}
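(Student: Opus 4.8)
The plan is to start from the telescoping representation of the sparse grid operator and to bound the truncation tail termwise using the one-dimensional estimate already in hand. Recall from \eqref{sparsegrid:eqn1} that $\mcS^{m,g}_{w}[\func] = \sum_{g(\ii)\leq w}\bigotimes_{n=1}^{N}\Delta_n^{m(i_n)}(\func)$, and that the full untruncated sum over all $\ii\in\bbNset^{N}_0$ reproduces $\func$ exactly. Hence the error is the complementary tail
\[
\func - \mcS^{m,g}_{w}[\func] = \sum_{\ii\,:\,g(\ii) > w}\;\bigotimes_{n=1}^{N}\Delta_n^{m(i_n)}(\func),
\]
whose $L^{\infty}_{\rho}(\Gamma)$ norm I would bound by the triangle inequality together with the product structure of the tensor-product difference operators. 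Each factor is controlled by \eqref{errorestimates:eqn4}, so each multi-index contributes a bound proportional to $\prod_{n=1}^{N} 2M(\func)\,C(\sigma_n)\,i_n\,e^{-\sigma_n 2^{i_n-1}}$, with $\tilde M(\func)$ replacing the per-dimension maxima once we pass to the full polyellipse ${\mathcal E}_{\hat\sigma_1,\dots,\hat\sigma_N}$ and use $\hat\sigma = \min_n\hat\sigma_n$ to isotropize the decay rate.

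Next I would invoke Lemma 3.5 of \cite{nobile2008a}, which is precisely the combinatorial device for summing such geometrically decaying contributions over the complement index set $\{g(\ii) > w\}$ with $g(\ii)=\sum_n(i_n-1)$. Feeding \eqref{errorestimates:eqn4} into that lemma collapses the multivariate sum into a one-dimensional geometric series in the level, at the cost of an algebraic-in-$N$ prefactor arising from the number of multi-indices populating each level surface. The auxiliary parameter $\delta$ enters here: one splits the decay exponent $\sigma 2^{i-1}$ so as to retain a summable geometric tail while extracting a convergence factor, and the quantities $a(\delta,\sigma)$, $\tilde C_2(\sigma)$ and $C_1(\sigma,\delta,\tilde M(\func))$ are exactly the constants produced by this split and the subsequent summation. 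Setting the free parameter to $\delta^{*}(\sigma)=(e\log 2 - 1)/\tilde C_2(\sigma)$, chosen to balance growth against decay, yields the cleanest closed-form constants.

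The final step is to re-express the bound, which is naturally stated in terms of the level $w$, in terms of the number of collocation knots $\eta$. For Clenshaw--Curtis abscissas the knot count grows like $\eta\sim 2^{w}$ up to polynomial-in-$w$ corrections, and inverting this relation (as in Theorems 3.10 and 3.11 of \cite{nobile2008a}) produces the exponents $\mu_1$, $\mu_2(N)$ and $\mu_3$ together with the factor $N\sigma/2^{1/N}$ appearing in \eqref{erroranalysis:sparsegrid:estimate}. The dichotomy between the two displayed bounds comes from comparing $w$ against $N/\log 2$: when $w > N/\log 2$ the level is large enough relative to the dimension that the sub-exponential factor $\exp(-(N\sigma/2^{1/N})\,\eta^{\mu_2(N)})$ governs the rate, whereas for $w\leq N/\log 2$ one is in the pre-asymptotic regime where only algebraic decay $\eta^{-\mu_1}$ can be guaranteed, assembling the constants $\mathcal{Q}$ and the $\max\{1,C_1\}^{N}$ factor in each case.

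I expect the main obstacle to be the bookkeeping in the second step: controlling the cardinality of each level surface $g(\ii)=\text{const}$ and summing the tail uniformly in $N$ so that the constants emerge in the stated closed form, while tracking the admissible range of $\delta$ that justifies fixing $\delta=\delta^{*}(\sigma)$. By contrast, the $w$-to-$\eta$ conversion, though delicate, is standard once the level-indexed estimate is established, and the analytic input—the single-dimension decay \eqref{errorestimates:eqn4}—has already been derived from Theorem \ref{errorestimates:theorem}, so the remaining work is the careful restatement of the two regimes of \cite{nobile2008a} in the isotropized form required here.
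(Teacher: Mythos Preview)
Your proposal is correct and follows essentially the same approach as the paper: the paper's proof is a one-line reference---apply the inequality \eqref{errorestimates:eqn4} to the proofs of Theorems 3.10 and 3.11 in \cite{nobile2008a}---and your outline is precisely an unpacking of what that means (tail of the telescoping sum, termwise bound via \eqref{errorestimates:eqn4}, summation through Lemma~3.5 of \cite{nobile2008a}, then the $w$-to-$\eta$ conversion that splits into the two regimes). There is no substantive difference in strategy.
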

\begin{proof} This is proved by applying the inequality of equation \eqref{errorestimates:eqn4}
to the proof of Theorems 3.10 and 3.11 in
  \cite{nobile2008a}.
  \end{proof}

In many practical cases not all dimensions of $\Gamma$ are equally important. 
In these cases the dimensionality of the sparse grid
can be significantly reduced by means of anisotropic sparse grids. It
is straightforward to build related anisotropic sparse approximation
formulas by having the function $g$ act differently on different input
random variables $q_n$. Anisotropic sparse stochastic collocation
\cite{nobile2008b} combines the advantages of isotropic sparse
collocation with those of anisotropic full tensor product collocation.


\section{Analyticity of the Newton iteration}
\label{Analyticity}

It is profitable here for purposes of clarification to consider to
consider the Newton iteration in a general function space context.
Specifically, let $X$ and $Y$ (see section \ref{background}) be
Banach spaces.  Consider the following problem: Find $\bx \in D
\subset X$ such that
\begin{equation}
\bbf(\bx,\bqq) = \0,
\label{analytic:eqn0}
\end{equation}
where $\bqq \in \Gamma$ and $\bbf:D\times\Gamma \rightarrow Y $.
Equation \eqref{analytic:eqn0} is then solved using the Newton
iteration under the conditions of the Newton-Kantorovich Theorem (see
section \ref{background}).


The convergence rate of the Newton iterates based on a sparse grid
approximation as a function of grid size is directly affected by the
regularity properties with respect to parameters $\bqq \in \Gamma$.
Regularity is characterized in terms of an analytic extension in
$\bbC^{N}$ of the iterates.


In the sequel we will treat $\bqq\in \Gamma$ as a random parameter,
which will be suppressed occasionally. Thus for example, below we will
write $\bbf \equiv \bbf_\bqq$, $\bJ \equiv \bJ_\bqq$, $M_v \equiv
M_{v,\bqq}$, etc., and we can write $\bff(\bx,\bqq)\equiv
\bff_\bqq(\bx)$.  For any $\bqq \in \Gamma$ (which we fix for now)
consider the Newton sequence
\begin{equation}
  \bx^{v} = M_{v}(\bx^{v-1}) \equiv \bx^{v-1} -
  \bJ(\bx^{v-1})^{-1}\bbf(\bx^{v-1}).
\label{analytic:eqn1}
\end{equation}
where $\bJ: X \rightarrow Y $ is the Fr\'echet derivative of $\bbf:D
\rightarrow Y$.  Assume that $\bx_0 \in D_0 \subset D$ and for all $v
\in \bbN$ let $D_v \subset D$ be the successive images under the map
$M_{v}$, so that $M_v(D_{v-1})=D_v$. Note at this point the random
parameter $\bqq\in\Gamma$ is unchanging throughout the iteration; the
iterated domains $D_v$ however depend on $\bqq$.

Suppose that the parameter $\bqq$ is now extended to a complex
parameter $\bg$ with $\bg \in \Psi \supset \Gamma$, where $\Psi\subset
\bbC^N$.  We can now form a complex extension of the sequence
\eqref{analytic:eqn1} as follows.  We will complexify the pair
$(\bx,\bqq)$ into a pair of complex variables $(\bz,\bg)$, with $\bz$
the complexification of $\bx$.  Assume that for $\bbf(\bx^{0}) \equiv
\bbf_\bqq(\bx^0):D_0 \rightarrow E_{0}$ there exists an analytic
extension $\bbf(\bx^{0}) \equiv \bbf_{\bg} (\bz^{0}): \Theta_0
\rightarrow \Phi_0$, where $\Theta_0$ and $\Phi_0$ are contained in a
suitable complex Banach spaces, which are the respective complex
extensions of $X$ and $Y$. Given a function $\bff$ on a real linear
domain $D$, and a function $\bff^*$ on a complex linear domain
$\Theta\supset D$, we say that $\bff^*$ is an {\it analytic extension}
of $\bff$ if $\bff^*$ is analytic on its domain, and the restriction
$\bff^*|_D=\bff$.  When there exists an analytic extension $\bff^*$ we
say that $\bff$ can be {\it analytically extended}.

\begin{rem}
Note that as before we write $\bbf \equiv \bbf_\bg$, $\bJ \equiv
\bJ_\bg$, $M_v \equiv M_{v,\bg}$, etc.  It is understood from context
that the notational equivalence is over the extension of the variables
$(\bx,\bqq)$ into the complex pair $(\bz,\bg)$.
\end{rem}

Similarly, assume that $\bJ(\bx^{0}) \in L(D_0,E_0)$ can be extended
analytically as $\bJ(\bz^{0}) \in L(\Theta_0,\Phi_0)$.  Here $L(\cdot,
\cdot)$ is the space of bounded operators between two spaces. Through
the above complexifications, equation (12) defines a complexification
of the mapping $M_v$.  We now repeat the above iteration using the
complexified maps defined here.  Thus there exists a series of sets
$\Theta_0,\dots,\Theta_v$ and $\Phi_0,\dots,\Phi_{v}$, such that for
$\bbf(\bx^{v}):D_{v} \rightarrow E_{v}$ and $\bJ(\bx^{v}) \in
L(D_{v},E_{v})$ the analytic extensions $\bbf(\bz^{v}):\Theta_{v}
\rightarrow \Phi_{v}$ and $\bJ(\bz^{v}) \in L(\Theta_{v},\Phi_{v})$
are onto.  Thus the sequence \eqref{analytic:eqn1} is extended in
$\bbC^{N}$ as follows: Let $\bz^0=\bx^0$ and for all $v \in \bbN$ and
$\bg \in \Psi$ (which we also fix) form the sequence
\begin{equation}
  \bz^{v} =
M_{v}(\bz^{v-1}) \equiv  
  \bz^{v-1} - \bJ(\bz^{v-1})^{-1}\bbf(\bz^{v-1}),
\label{analytic:eqn1a}
\end{equation}
where $M_{v}:\Theta_{v-1}
\rightarrow \Theta_{v}$.

\begin{rem}
  The domain $\Theta_0$ contains the initial condition $\bz^{0}$.
  Under certain assumptions and with a judicious choice of $\Theta_0$
  it can be shown that the sequence in \eqref{analytic:eqn1a}
  converges in a pointwise sense inside $\Theta_0$. This will be
  explored in detail in section \ref{regions}.
\end{rem}

Suppose that $\bz^{v}$ is an analytic extension of $\bx_{v}$ on $\Psi
\subset \bbC^{N}$ (see Figure \ref{analyticy:extensionfigure}).  Then
the convergence rates of the sparse grid applied to any entry of
interest of $\bz^{v}$ can be characterized. The size of the set $\Psi$
determines the regularity properties of the solution. From the sparse
grid discussion in section \ref{sparsegrids} we embed a polyellipse
${\mathcal E}_{\hat \sigma_1, \dots, \hat \sigma_{N}} : =
\Pi_{n=1}^{N}{\mathcal E}_{n,\hat \sigma_n}$ in $\Psi$. From Theorem
\ref{erroranalysis:sparsegrid:estimate} the
$L^{\infty}_{\rho}(\Gamma)$ convergence rate of the sparse grid is
sub-exponential (or algebraic) with respect to the number of sparse
grid knots $\eta$. The decay of the sparse grid is dominated by
$\sigma = \min_{n=1,\dots,N} \sigma_n$. Thus, the larger $\sigma$ is
the faster the convergence rate.


\begin{figure}[htp]
\begin{center}
\begin{tikzpicture}
    \begin{scope}[scale = 1.5]

      \node[scale = 1.5,
        shape=semicircle,rotate=270,fill=blue!40,
      semitransparent,inner sep=12.7pt, anchor=south, outer sep=0pt]
    at (1,0) (char) {};
    \node[scale=1.5, 
      shape=semicircle,rotate=90,fill=blue!40,
      semitransparent,inner sep=12.7pt, anchor=south, outer sep=0pt]
    at (-1,0) (char) {}; \path
    [fill=blue!40,semitransparent]
    (-1.001,-1) rectangle (1.001,1.001);


    \draw [line width=2] (-1, 0) -- (1, 0) node [below left]  {$\Gamma $};
    
    \node at (0.2,-0.2) {$\bqq$};

    \draw [line width=2,dashed,->, >=latex]
    (0, 0) -- (-0.6, 0.95) node [above]  {$\bv$};

    \draw [->] (-2.5, 0) -- (2.5, 0) node [above left]  {$\R^{N} $};
    \draw [->] (0,-1.5) -- (0,1.5) node [below right] {$ i \R^{N}$};
    \draw (1,-3pt) -- (1,3pt)   node [above] {$$};
    \draw (-1,-3pt) -- (-1,3pt) node [above] {$$};

    \end{scope}
    
    \node [below right] at (1.50,1.25) {\Large{$\Psi$}}; 
\end{tikzpicture}
\end{center}
\caption{Analytic extension of the domain $\Gamma$. Any vector $\bqq
  \in \Gamma$ is extended in $\Psi$ by adding a vector $\bv \in
  \bbC^{N}$ i.e. $\bg = \bqq + \bv$.} 
\label{analyticy:extensionfigure}
\end{figure}

\begin{rem} For finite dimensional spaces $X=Y=\R^{m}$, $m \in \bbN_0$, 
  the Fr\'{e}chet derivative $\bJ$ corresponds
  to the Jacobian of $\bbf$. In the rest of the paper it is assumed
  that $D \subset \R^{m}$ and $\|\cdot\|$ corresponds to the standard
  Euclidean norm or the standard matrix norm, depending on context.
  For the case of the power flow equations $m$ will be
  simply related to the number of nodes of the power system
  \cite{Bergen2000}.  We will be using the notion of analytic
  extensions, which can be defined as follows.
\end{rem}

We can now prove an important theorem for our purposes. First, denote
$\bbf(\bz^v):\Theta_{v} \rightarrow \Phi_{v}$ as $\bbf^v$, and
$\bJ(\bz^v) \in L(\Theta_{v},\Phi_{v})$ as $\bJ^v$ (note that this
depends on the generic initial $\bz_0 \in \Theta_0$ at which the
Jacobian is computed).  



\begin{theo}
Assume that for all $v \in \bbN_0$: $D_v \subset X$  and 
\begin{enumerate}[(i)]
    \item 
$\bbf^v:D_{v} \rightarrow E_{v}$ can be analytically extended to
      $\bbf^v:\Theta_{v} \rightarrow \Phi_{v}$.

\item There exists a coefficient $c_v > 0$ such that
\[
  \sigma_{min}\left(
\sJ{v}
\right)
\geq c_v,
\]
where $\sigma_{min}(\cdot)$ refers to the minimum singular value,
$\bJ^v_R := \Real{\bJ^v}$ and $\bJ^v_I := \Imag{\bJ^v}$.
\end{enumerate}
Then for all $v \in \bbN_0$ there exists an analytic extension of
$\bx^{v}$ on $\Psi$.
\label{analytic:theorem1}
\end{theo}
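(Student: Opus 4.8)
The plan is to argue by induction on $v \in \bbN_0$, establishing at each level that the complexified iterate $\bz^v$ from \eqref{analytic:eqn1a} is an analytic extension of $\bx^v$ on $\Psi$. The whole argument reduces to showing that the three operations assembling the Newton map $M_v$ --- evaluation of $\bbf$ and $\bJ$ along the previous iterate, inversion of $\bJ$, and formation of the affine update --- each preserve analyticity in the complex parameter $\bg \in \Psi$.

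First I would isolate the content of hypothesis (ii). Because $\bbf_\bg$ is holomorphic in $\bz$, its derivative $\bJ^v$ is complex-linear, so $\sJ{v}$ is exactly the real $2m \times 2m$ representation of the complex matrix $\bJ^v = \bJ^v_R + i\,\bJ^v_I$. A short computation with the complex structure shows that the singular values of this representation are those of $\bJ^v$, each counted twice, whence $\sigma_{min}(\sJ{v}) = \sigma_{min}(\bJ^v)$. Thus (ii) says precisely that $\bJ^v(\bg)$ is invertible for every $\bg \in \Psi$ with $\|\bJ^v(\bg)^{-1}\| \leq 1/c_v$ uniformly; in particular $\det \bJ^v(\bg) \neq 0$ on all of $\Psi$.

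For the base case $v = 0$ we take $\bz^0 = \bx^0$ to be the initial datum, which is analytic on $\Psi$ (e.g. constant in $\bg$). For the inductive step, assume $\bz^{v-1}$ is analytic on $\Psi$ with values in $\Theta_{v-1}$. By hypothesis (i) the extension $\bbf_\bg(\bz)$ is analytic jointly in $(\bz,\bg)$, and likewise $\bJ_\bg(\bz)$; composing with the analytic map $\bg \mapsto (\bz^{v-1}(\bg),\bg)$ shows that $\bg \mapsto \bbf(\bz^{v-1})$ and $\bg \mapsto \bJ(\bz^{v-1})$ are analytic. For the inverse I would invoke the preceding paragraph, which guarantees $\det \bJ(\bz^{v-1})(\bg) \neq 0$ on $\Psi$; Cramer's rule then gives $\bJ(\bz^{v-1})^{-1} = \mathrm{adj}(\bJ(\bz^{v-1}))/\det(\bJ(\bz^{v-1}))$, whose entries are analytic since the adjugate entries are polynomials in the analytic entries of $\bJ(\bz^{v-1})$ and the nowhere-vanishing determinant yields an analytic reciprocal. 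As sums, products, and compositions of analytic functions are analytic, it follows that $\bz^v = \bz^{v-1} - \bJ(\bz^{v-1})^{-1}\bbf(\bz^{v-1})$ is analytic on $\Psi$, closing the induction.

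The main obstacle is not the calculus of analytic operations, which is routine, but ensuring the Newton update is well defined at every point of the complex region $\Psi$: matrix inversion is analytic only off the zero locus of the determinant, so one must rule out any $\bg \in \Psi$ at which $\bJ(\bz^{v-1})$ becomes singular. Hypothesis (ii) supplies exactly this through its uniform bound $c_v > 0$, and the identity $\sigma_{min}(\sJ{v}) = \sigma_{min}(\bJ^v)$ is what makes the hypothesis both natural and checkable; verifying such a singular-value bound for a concrete Jacobian --- for instance that of the power flow equations --- is the genuinely delicate part in applications.
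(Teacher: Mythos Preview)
Your argument is correct and takes a genuinely different route from the paper. The paper's proof deliberately avoids showing directly that $(\bJ^v)^{-1}$ is analytic; instead it writes the Newton step in real $2m\times 2m$ block form, differentiates both sides with respect to the real and imaginary parts of each coordinate, and checks by hand that the Cauchy--Riemann relations for $\bz^{v+1}$ follow from those for $\bJ^v$, $\bbf^v$, and $\bz^v$. Separate analyticity in each complex variable is then upgraded to joint analyticity via Hartog's theorem and Osgood's lemma. You bypass this entirely by observing that hypothesis~(ii) forces $\det\bJ^v(\bg)\neq 0$ on $\Psi$, so Cramer's rule expresses $(\bJ^v)^{-1}$ as a ratio of polynomials in the analytic entries of $\bJ^v$ with nonvanishing denominator; the Newton update is then an algebraic combination of analytic functions. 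Your route is shorter, stays within the elementary calculus of several complex variables, and makes transparent exactly where (ii) enters. The paper's approach has the modest advantage of displaying the block structure that is reused later (Theorem~\ref{analytic:lemma1} and Section~\ref{regions}), and of not requiring the identification $\sigma_{\min}\big(\sJ{v}\big)=\sigma_{\min}(\bJ^v)$, though that identity is standard.
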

\begin{proof}
The main strategy for this proof is to use the Cauchy-Riemann
equations. This avoids having to explicitly show that the inverse of the
complex Jacobian matrix $\bJ^v$ is analytic. The existence of an
analytic extension for $\bz^{v}$ for each separate complex dimension
is shown. The Hartog's Theorem is then used to show analyticity with
respect to all the complex dimensions.

For fixed $v$ consider the extension $\bx^{v} \rightarrow \bz^{v} =
\bx^{v} + \bw^v$ in $\Theta_v$, where $\bw^v \in \bbC^{m}$ and
$\bz^{v} \in \Theta_v$.  In complex form $\bz^{v} = \bz^{v}_{R} + i
\bz^{v}_{I}$, where $\bz^{v}_{R} = \Real\bz^{v}$, and $\bz^{v}_{I} =
\Imag\bz^{v}$. Furthermore, consider the extension of $\bqq
\rightarrow \bg = \bqq + \bv$ in $\Psi$, where $\bv \in \bbC^{N}$ and
$\bg \in \Psi$. The extension of the iteration \eqref{analytic:eqn1} on $\Theta_v
\times \Psi$ leads to the following block form iteration
\begin{equation}
  \sJ{v}
  \left(
  \szv{v+1}
  -
\szv{v}
\right)
=
-\sfv{v},
\label{analytic:eqn2}
\end{equation}
where $\bbf^v_R := \Real{\bbf^v}$ and $\bbf^v_I := \Imag{\bbf^v}$.
From $(ii)$ it follows that equation \eqref{analytic:eqn2} is well
posed and is a valid extension of equation \eqref{analytic:eqn1} on
$\Theta_v \times \Psi$. We now show that $\bz^{v+1}$ is an analytic
extension on $\Theta_v \times \Psi$.

We focus our attention on the $k^{th}$ variable of $\bz^v$ as
$z^{v}_{k}$ and write it in complex form as $z^{v}_{k} = s + iw$.  By
differentiating equation \eqref{analytic:eqn2} with respect to $s$ and
$w$ we obtain
\begin{equation}
  \begin{split}
\partial_s
\sJ{v}
\left(
\szv{v+1}
-
\szv{v}
\right)
+
\sJ{v}
\partial_s
\left(
\szv{v+1}
-
\szv{v}
\right)
&=
- \partial_s \sfv{v} \\
\partial_w
\sJ{v}
\left(
\szv{v+1}
-
\szv{v}
\right)
+
\sJ{v}
\partial_w
\left(
\szv{v+1}
-
\szv{v}
\right)
&=
-\partial_w \sfv{v}
  \end{split}
  .
\label{analytic:eqn4}
\end{equation}
From assumption $(ii)$ we conclude that $\partial_{s}\bz^{v+1}_{R}$,
$\partial_{s}\bz^{v+1}_{I}$, $\partial_{w}\bz^{v+1}_{R}$ and
$\partial_{w}\bz^{v+1}_{I}$ exist on $\Theta_v \times \Psi$. The
following step is to show that the Cauchy-Riemann equations
for $\bz^{v+1}$ are
satisfied on $\Theta_{v} \times \Psi$.

Let $P(\bz^v) := \partial_{s}\bz^{v}_{R} - \partial_{w}\bz^{v}_{I}$
and $Q(\bz^v) := \partial_{w}\bz^{v}_{R} + \partial_{s}\bz^{v}_{I}$,
then from equation \eqref{analytic:eqn4}
\[
\begin{split}
&
\begin{bmatrix}
(\partial_s \bJ^v_R - \partial_w \bJ^v_I) &
-
(\partial_s \bJ^v_I + \partial_w \bJ^v_R) 
\\
(\partial_s \bJ^v_I + \partial_w \bJ^v_R) &
-
(\partial_s \bJ^v_R - \partial_w \bJ^v_I)
\end{bmatrix}
\left(
\szv{v+1}
-
\szv{v}
\right) \\
&+ 
\sJ{v}
\left(
\begin{bmatrix}
  P(\bz^{v+1}) \\
  Q(\bz^{v+1})
\end{bmatrix}
-
\begin{bmatrix}
  P(\bz^{v}) \\
  Q(\bz^{v})
\end{bmatrix}
\right)
=
-\begin{bmatrix}
  \partial_s \bbf^v_R - \partial_w \bbf^v_I \\
  \partial_s \bbf^v_I + \partial_w \bbf^v_R \\
\end{bmatrix}.
\end{split}
\]

Now, $(i)$ implies that
$\bJ^v \in L(D_{v},E_{v})$ can be analytically extended to
$\bJ^v \in L(\Theta_{v},\Phi_{v})$.
Since $\bJ(\bz^v,\bg)$ and $\bbf^{v}(\bz^v,\bg)$ are analytic on
$\Theta_{v} \times \Psi$ then from the Cauchy-Riemann equations
\[
\begin{bmatrix}
(\partial_s \bJ^v_R - \partial_w \bJ^v_I) &
-
(\partial_s \bJ^v_I + \partial_w \bJ^v_R) 
\\
(\partial_s \bJ^v_I + \partial_w \bJ^v_R) &
-
(\partial_s \bJ^v_
R + \partial_w \bJ^v_I)
\end{bmatrix} = \0
\,\,\,\mbox{and}\,\,\,
\begin{bmatrix}
  \partial_s \bbf^v_R - \partial_w \bbf^v_I \\
  \partial_s \bbf^v_I + \partial_w \bbf^v_R \\
\end{bmatrix}
=
\0.
\]
Since $z^v_k$ is a linear polynomial of $s + iw$ then $P(\bz^{v}) =
Q(\bz^{v})=\0$ on $\bbC^{N}$ and thus $P(\bz^{v+1}) = Q(\bz^{v+1})=\0$
on $\Theta_{v} \times \Psi$. We conclude that $\bz^{v+1}$ is analytic
for the $k^{th}$ variable for all $\bz^{v} \in \Theta_{v}$ and $\bg
\in \Psi$.  Following a similar argument we can show that for $l = 1,
\dots, N$ the $l^{th}$ variable extension of $\bqq$ has leads to an analytic extension of $\bz^{v+1}$ whenever $\bz^{v} \in \Theta_v$ and $\bg \in \Psi$.  We now extend the
analyticity of $\bz^{v+1}$ on all of $\Theta_{v} \times \Psi$.

Since $z^{v+1}_k$ is analytic for all $k = 1, \dots, m$ and the
$l^{th}$ variable of $\bqq$ has an analytic extension for all $l = 1,
\dots, N$ whenever $\bz^{v} \in \Theta_v$ and $\bg \in \Psi$, then
from Hartog's theorem we conclude that $\bz^{v+1}$ is continuous on
$\Theta_{v} \times \Psi$. From Osgood's lemma it follows that
$\bz^{v+1}$ is analytic on $\Theta_{v} \times \Psi$.  From an
induction argument and using that fact that the composition of
analytic functions is analytic then it follows that $\bz^{v+1}$ is
analytic in $\Psi$, for all $v\in \bbN$.
\end{proof}

If the assumptions of Theorem \ref{analytic:theorem1} are satisfied
then $\bz^v$ is complex analytic in $\Psi$ and it is reasonable to
construct a series of sparse grid surrogate models of the entries of
the vector $\bx^v$. Note that in practice we restrict out attention to
a subset of the variables of interest of $\bx^v$. With a slight abuse
of notation denote ${\mathcal S}^{m,g}_{w}[\bx^v(\bqq)]$ as the sparse
grid approximation of the entries of interest of the vector $\bx^v$.

From Theorem \ref{erroranalysis:theorem1} we observe that the accuracy
of the sparse grid approximation is a function of i) the size of the
polyellipse ${\mathcal E}_{\hat \sigma_1, \dots, \hat \sigma_N} \subset
\Psi$ and ii)

\[
\tilde{M}(\bz^v) = \sup_{\bz^v \in \Theta_{v}
, 
    k =1,\dots,m} |z^v_k(\bg)|.
\]
If the complex sequence \eqref{analytic:eqn1a} does not converge, then
the size of the sets $\Theta_v$ can become unbounded.  In particular,
it is possible that $\tilde M(\bz^v) \rightarrow \infty$ as $v
\rightarrow \infty$ even if $\bJ(\bz^v)^{-1} \in
L(\Phi_{v},\Theta_{v})$ exists for all $v \in \bbN_0$.  Our objective
now is to analyze under what conditions the complex sequence remains
bounded. In particular, for all $v \in \bbN_{0}$, we ask if it is possible to construct
bounded regions $U \subset \bbC^{m}$ and $\Psi \subset \bbC^{N}$ such
that $\bz^v$ is contained in $U$ and thus
\[
\tilde M(\bz^v) \leq \sup_{\bz^v \in U} \|\bz^v\|_{\infty}.
\]
To help answer this question we first show that the complex sequence
\eqref{analytic:eqn1a} is itself a Newton sequence.

\begin{rem} We have to clarify what we mean by 
the Fr\'echet derivative of the complex function $\bff:\Theta_v
\rightarrow \Psi_v$.  The algebraic problem of equation
\eqref{analytic:eqn0} can be complexified as follows: Find $\bz \in
\Theta_{0}$ such that $\bff(\bz, \bg)=\0$ for all $ \bg \in
\Psi$. This can be re-written in vector form as: Find $\bz = \bz_R
+ i\bz_I \in \Theta_0$ such that $\Real \bff(\bz_R,\bz_I, \bg_R,
\bg_I)=\0$ and $\Imag \bff(\bz_R,\bz_I, \bg_R, \bg_I)=\0$ for all $\bg
= \bg_R + i\bg_I \in \Psi$. The corresponding Newton iteration is based on
\begin{equation}
  \begin{bmatrix}
  \partial_{\bz_R} \bff^v_R
  & \partial_{\bz_I} \bff^v_R
  \\
  \partial_{\bz_R} \bff^v_I
  & \partial_{\bz_I} \bff^v_I
  \end{bmatrix}
  \left(
  \szv{v+1}
  -
\szv{v}
\right)
=
-\sfv{v},
\end{equation}
where $\partial_{\bz_R} \bbf^v_R$ is the Fr\'echet derivative of
$\bbf^v_R$ with respect to the variables $\bz_R$ and similarly for the
rest. We refer to the matrix
\[
\bJ_{\bz^v} :=
  \begin{bmatrix}
  \partial_{\bz_R} \bbf^v_R
  & \partial_{\bz_I} \bbf^v_R
  \\
  \partial_{\bz_R} \bbf^v_I
  & \partial_{\bz_I} \bbf^v_I
  \end{bmatrix}
\]
as the Fr\'{e}chet derivative of $\bff:\Theta_v \rightarrow \Psi_v$.
\end{rem}

\begin{theo}
Suppose assumptions i) and ii) of Theorem \ref{analytic:theorem1} are
satisfied. Then the complex analytic extension $\bJ(\bz^v) \in
L(\Theta_{v},\Phi_{v})$ of $\bJ(\bx^v) \in L(D_{v},E_{v})$ is
equivalent to the Fr\'echet derivative of $\bbf^v:\Theta_{v}
\rightarrow \Phi_{v}$, i.e.
\[
\sJ{v} = 
  \begin{bmatrix}
  \partial_{\bz_R} \bbf^v_R
  & \partial_{\bz_I} \bbf^v_R
  \\
  \partial_{\bz_R} \bbf^v_I
  & \partial_{\bz_I} \bbf^v_I
  \end{bmatrix}.
\]
\label{analytic:lemma1}
\end{theo}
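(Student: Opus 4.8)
The plan is to observe that this equivalence is nothing more than the realification of the holomorphic derivative, made explicit through the Cauchy--Riemann equations already established in the proof of Theorem \ref{analytic:theorem1}. The first step is to identify the complex analytic extension $\bJ^v = \bJ(\bz^v)$ with the holomorphic derivative $\partial_{\bz}\bbf^v$. On the real slice $\Gamma$ the Fr\'echet derivative of $\bbf^v$ coincides with $\bJ(\bx^v)$, and the holomorphic derivative $\partial_{\bz}\bbf^v$ restricts on the reals to this same real derivative; since $\bbf^v$ is analytic by assumption (i), uniqueness of analytic continuation forces $\bJ^v = \partial_{\bz}\bbf^v$ on all of $\Theta_v \times \Psi$. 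Writing $\bbf^v = \bbf^v_R + i\bbf^v_I$ and using $\partial_{\bz}\bbf^v = \partial_{\bz_R}\bbf^v$ (valid because $\bbf^v$ is holomorphic in $\bz$), I would then read off
\[
\bJ^v_R = \partial_{\bz_R}\bbf^v_R, \qquad \bJ^v_I = \partial_{\bz_R}\bbf^v_I.
\]

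Second, I would invoke the Cauchy--Riemann equations componentwise. For the $k^{th}$ complex variable $z^v_k = s + iw$ these are exactly the relations $\partial_s \bbf^v_R = \partial_w \bbf^v_I$ and $\partial_s \bbf^v_I = -\partial_w \bbf^v_R$ already derived in the proof of Theorem \ref{analytic:theorem1}; assembled over all coordinates they read $\partial_{\bz_R}\bbf^v_R = \partial_{\bz_I}\bbf^v_I$ and $\partial_{\bz_I}\bbf^v_R = -\partial_{\bz_R}\bbf^v_I$. Substituting these into the previous display gives $-\bJ^v_I = \partial_{\bz_I}\bbf^v_R$ and $\bJ^v_R = \partial_{\bz_I}\bbf^v_I$, so that
\[
\sJ{v}
=
\begin{bmatrix}
\bJ^v_R & -\bJ^v_I \\
\bJ^v_I & \bJ^v_R
\end{bmatrix}
=
\begin{bmatrix}
\partial_{\bz_R}\bbf^v_R & \partial_{\bz_I}\bbf^v_R \\
\partial_{\bz_R}\bbf^v_I & \partial_{\bz_I}\bbf^v_I
\end{bmatrix},
\]
which is precisely the Fr\'echet derivative of the complexified map as defined in the preceding remark.

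The routine part is the bookkeeping identifying the $2m\times 2m$ realification $\sJ{v}$ of the complex matrix $\bJ^v_R + i\bJ^v_I$ with the block form above; this is the standard representation of multiplication by a complex number and requires no hypotheses. The one place demanding care --- and the main conceptual step --- is the identification $\bJ^v = \partial_{\bz}\bbf^v$: one must justify that the analytic extension of the real Jacobian and the holomorphic derivative of the analytic extension of $\bbf$ are the same object. I expect this to follow cleanly from assumption (i) together with uniqueness of analytic continuation, but it is worth stating explicitly, since the whole equivalence collapses onto Cauchy--Riemann only once this identification is in place.
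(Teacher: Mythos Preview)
Your proposal is correct and follows essentially the same route as the paper: both arguments hinge on (a) using uniqueness of analytic continuation (the identity theorem) to identify the analytic extension of the real Jacobian with the holomorphic derivative of the extended $\bbf^v$, and (b) invoking the Cauchy--Riemann equations to recover the block structure $\sJ{v}$. The paper presents the argument first in the scalar case $m=1$ and then lifts it entrywise, whereas you work directly in the vector setting, but the mathematical content is identical.
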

\begin{proof}
We first prove this result for $m = 1$ dimension. Suppose that $f:D
\rightarrow \R$, is a Fr\'echet differentiable function and let $f:\Xi
\rightarrow \bbC$ be the analytic continuation on the non-empty open
set $\Xi \subset \bbC$. The analytic function $f:\Xi \rightarrow \bbC$
can be rewritten as $f(x,y) = f_{R}(x,y) + if_{I}(x,y)$ for all $x +
iy \in \Xi$. Since $f$ is analytic on $\Xi$, from the the identity
theorem \cite{Ablowitz2003} (uniqueness of complex analytic
extensions) we have that $f_{R}$ and $f_{I}$ are unique in
$\Xi$. Furthermore, since $f$ is analytic the Cauchy-Riemann equations
are satisfied. Thus
\begin{equation}
\begin{bmatrix}
  \partial_x f_R & -\partial_x f_I \\
  \partial_x f_I & \partial_x f_R
\end{bmatrix} 
=
\begin{bmatrix}
  \partial_x f_R & \partial_y f_R \\
  \partial_y f_I & \partial_y f_I
\end{bmatrix}     
\label{analytic:lemma1:eqn1}
\end{equation}
in $\Xi$ and $f:\Xi \rightarrow \bbC$ is Fr\'echet differentiable.
Now, $\partial_x f(x,y) = \partial_x f_R(x,y) + i \partial_x f_I(x,y)$
in $\Xi$, and from the uniqueness property of the Fr\'echet derivative
all the terms are unique. Recall that $\partial_x f(x)$ defined in $D$
is the Fr\'echet derivative of $f:D \rightarrow \R$.  Write the
analytic extension of $\partial_x f(x)$ (defined in $D$) on $\Xi$ as
$g(x,y) + ih(x,y)$, with
$x+iy \in \Xi$.  Since $\partial_x f(x) = \partial_x f(x,y) =
\partial_x f_R(x,y) + i \partial_x f_I(x,y)$ for $y = 0$ and $x \in
D$, from the uniqueness of the analytic extension we conclude
$g = \partial_x f_R$ and $h = \partial_x f_I$ for all $x + iy \in
\Xi$.  From equation \eqref{analytic:lemma1:eqn1} the conclusion follows.

We can now prove our statement for the general case using a simple extension of the
above argument. Since $\bbf^v:\Theta_{v} \rightarrow \Phi_{v}$ is
complex analytic, from the identity theorem \cite{Ablowitz2003} it
is the unique extension of $\bbf^v:D_{v} \rightarrow E_{v}$.  (Note
that the unique extension of the identity theorem applies in
multi-variate case, which includes the variables $\bx^v$ and $\bqq$ in
the domains $\Theta_v$ and $\Psi$ respectively.)  From the
Cauchy-Riemann equations the functions $\bbf^v:\Theta_{v} \rightarrow \Phi_{v}$ are
Fr\'echet differentiable and unique. Now, with a slight abuse of
notation, denote $\bJ_{\bz^v_R}$ as the Jacobian of $\bbf^v:\Theta_{v}
\rightarrow \Phi_{v}$, with respect to the real variables $\bz^v_R$
only. By using the above one dimensional argument we can show that the
analytic extension of each entry of $\bJ(\bx^v)$ matches $\bJ_{\bz^v}$ on the real part of $\Theta_v$.  From the Cauchy-Riemann
equations we conclude that $\bJ(\bz^v) \in L(\Theta_{v},\Phi_{v})$ is
equivalent to the Fr\'echet derivative of $\bbf^v:\Theta_{v}
\rightarrow \Phi_{v}$.
\end{proof}

From Theorem \ref{analytic:lemma1} it follows that the complex sequence \eqref{analytic:eqn1a} is a Newton sequence. We can
now apply the Newton-Kantorovich Theorem to study the sequence
convergence as $v \rightarrow \infty$.

\subsection{Regions of Analyticity}
\label{regions}

The size of a polyellipse embedded in the domain $\Psi$ and the
magnitude of $\bz^{v} \in \Theta_{v}$ (for any $v \in \bbN_{0}$)
directly impacts the accuracy of the sparse grid (c.f. Theorem
\ref{erroranalysis:theorem1}). For each $v \in \bbN_{0}$ the size of
the domains $\Theta_v$ and $\Psi$ will be characterized by the
magnitude of the minimum singular value
\begin{equation}
\sigma_{min}\left( \sJ{v} \right) \geq c_v > 0
\label{regions:eqn1}
\end{equation}
for some $c_v > 0$.  However, constructing the domain $\Psi$ would
require imposing inequality conditions for each Newton iteration. This
leads to a highly complex coupled problem that is hard to
solve. Moreover, if the complex extension $\bz^v$ grows rapidly with
respect to $v$ then the size of the domain $\Psi$ will be most likely
severely constrained. In contrast, by applying the Newton-Kantorovich
Theorem it is sufficient to impose conditions on the initial Jacobian
$(v = 0)$ to construct a region of analyticity for $\Psi \subset
\bbC^{N}$. Furthermore, the size of the iteration $\bz^v$ will be
controlled.

Consider the iteration
\begin{equation}
\balpha^{v+1} = \balpha^{v} -
\bJ(\balpha^v,\bg)^{-1} \bbf(\balpha^{v},\bg),
\label{regions:eqn1a}
\end{equation}
where $\bg \in \Psi$,
\[
\balpha^0 := \begin{bmatrix}
  \bx_0 \\
  \0
  \end{bmatrix}
,\,
\balpha^v := \szv{v},\,
  \bJ(\balpha^v,\bg) := \sJ{v},
  \,\mbox{and}\,
\bbf(\balpha^v,\bg) 
:=
\sfv{v},
\]
for all $v \in \bbN_{0}$.
\begin{rem}
From Theorem \ref{analytic:lemma1} or, alternatively, the
Cauchy-Riemann equations, the matrix $\bJ(\balpha^v,\bg)$ corresponds
to the Fr\'echet derivative of $\bbf(\balpha^v,\bg)$.  Thus the
sequence \eqref{regions:eqn1a} is an Newton iteration and the
Newton-Kantorovich Theorem can be used to analyze its convergence
properties.
\end{rem}

\begin{asum}
For all $\bqq \in \Gamma$ Assumption \ref{regions:condition1} is satisfied.
\label{regions:condition2}
\end{asum}

\begin{asum}
Assume that $\tilde D$, where $D \subset \tilde D$, is an open convex
set in $\R^{2 m}$ and the following Lipschitz condition is satisfied:
\[
  \| \bJ(\bx,\bg) - \bJ(\by,\bg) \| \leq \lambda_{e} \| \bx - \by \|,
\]
for all $\bx,\by \in \tilde D$, $\bg \in \Psi$, and $\lambda_{e} \geq
0$. Furthermore assume that for all $\bg \in \Psi$
\[
  \| \bJ(\balpha^0,\bg)^{-1} \| \leq \varkappa_{e},
  \| \bJ(\balpha^0,\bg)^{-1} \bbf(\balpha^0,\bg)\| \leq \delta_{e}
  , h_{e}
  = 2 \varkappa_{e} \lambda_{e} \delta_{e} \leq 1,
\]
  and $U(\balpha^{0},t^{*}_e) \subset \tilde D$, where $t^{*}_{e} =
  \frac{2}{h_{e}}( 1 - \sqrt{1 - h_{e}})\delta_{e}$.
  \label{regions:condition3}
\end{asum}
\begin{theo}
If Assumptions \ref{regions:condition2}  and \ref{regions:condition3} are 
satisfied then for all $\bg \in \Psi$
\begin{enumerate}
\item The Newton iterates $\balpha^{v+1} = \balpha^{v} +
  \bJ(\balpha^v,\bg)^{-1} \bbf(\balpha^{v},\bg)$ exist and $\balpha^v
  \in U(\balpha_0,t^{*}_{e}) \subset \tilde D$.
\item $\balpha^* := \lim_{v \rightarrow \infty} \balpha^v$ exists,
  $\balpha^* \in \overline{U(\balpha^0,t^{*}_{e})}$, and
  $\bbf(\balpha^{*},\bg) = \0$.
\item $\bJ(\balpha^v,\bg)^{-1}$ exists for all $v \in \bbN$ and
  equation \eqref{regions:eqn1} is satisfied.
\end{enumerate}
\label{regions:theorem3}
\end{theo}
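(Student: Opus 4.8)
The plan is to recognize the complexified recursion \eqref{regions:eqn1a} as a bona fide real Newton iteration and then quote the Newton--Kantorovich theorem verbatim. The key enabling fact is Theorem \ref{analytic:lemma1} (or, equivalently, the Cauchy--Riemann relations), which identifies the block matrix $\bJ(\balpha^v,\bg)=\sJ{v}$ with the Fr\'echet derivative of the real-ified map $\balpha\mapsto\bbf(\balpha,\bg)$ on $\tilde D\subset\R^{2m}$ for each fixed $\bg\in\Psi$. Granting this identification, the remark preceding the theorem already records that $\balpha^{v+1}=\balpha^{v}-\bJ(\balpha^v,\bg)^{-1}\bbf(\balpha^v,\bg)$ is a genuine Newton sequence in $\R^{2m}$, so the entire statement collapses to verifying the hypotheses of Assumption \ref{regions:condition1} for this real-ified problem, uniformly in $\bg$.

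First I would match Assumption \ref{regions:condition3} against Assumption \ref{regions:condition1} term by term: the Lipschitz estimate with constant $\lambda_e$ on $\tilde D$, the bounds $\|\bJ(\balpha^0,\bg)^{-1}\|\le\varkappa_e$ and $\|\bJ(\balpha^0,\bg)^{-1}\bbf(\balpha^0,\bg)\|\le\delta_e$, the Kantorovich condition $h_e=2\varkappa_e\lambda_e\delta_e\le1$, and the inclusion $U(\balpha^0,t^*_e)\subset\tilde D$ with $t^*_e=\tfrac{2}{h_e}(1-\sqrt{1-h_e})\delta_e$. These are exactly the data required by Assumption \ref{regions:condition1}, now applied to $\balpha\mapsto\bbf(\balpha,\bg)$ with initial point $\balpha^0=\bigl[\bx_0\T,\,\0\T\bigr]\T$; Assumption \ref{regions:condition2} supplies the real baseline that keeps assumptions (i)--(ii) of Theorem \ref{analytic:theorem1} in force, so that Theorem \ref{analytic:lemma1} is legitimately available. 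The Newton--Kantorovich theorem \cite{Argyros2008} then yields conclusions (1) and (2) without further work: the iterates and the inverses $\bJ(\balpha^v,\bg)^{-1}$ exist, each $\balpha^v$ stays in $U(\balpha_0,t^*_e)\subset\tilde D$, and $\balpha^*=\lim_{v\to\infty}\balpha^v$ exists in $\overline{U(\balpha^0,t^*_e)}$ with $\bbf(\balpha^*,\bg)=\0$.

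For conclusion (3) I would extract the singular-value floor from the same machinery. Newton--Kantorovich already guarantees invertibility of $\bJ(\balpha^v,\bg)$ at every iterate; moreover, the Banach perturbation lemma applied along the iteration controls $\|\bJ(\balpha^v,\bg)^{-1}\|$ by the $\bg$-independent quantities $\varkappa_e$ and $\lambda_e t^*_e$, using $\|\balpha^v-\balpha^0\|\le t^*_e$. Since $\sigma_{min}(\bJ(\balpha^v,\bg))=1/\|\bJ(\balpha^v,\bg)^{-1}\|_2$ for the spectral norm, this upper bound on the inverse turns into a positive lower bound $c_v:=1/\sup_{\bg\in\Psi}\|\bJ(\balpha^v,\bg)^{-1}\|>0$, which is precisely \eqref{regions:eqn1}; the base case is immediate from $\|\bJ(\balpha^0,\bg)^{-1}\|\le\varkappa_e$, giving $c_0\ge1/\varkappa_e$.

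The hard part will not be the invocation of Newton--Kantorovich itself but securing the two uniformities in $\bg\in\Psi$ on which the conclusions rest. I expect the genuine obstacle to be (a) arguing that the constants $\lambda_e,\varkappa_e,\delta_e$ of Assumption \ref{regions:condition3} are truly $\bg$-independent, so that the ball $U(\balpha^0,t^*_e)$ and hence the confinement of $\{\balpha^v\}$ do not deteriorate as $\bg$ sweeps $\Psi$, and (b) obtaining a \emph{uniform} positive floor $c_v$: pointwise invertibility is free from the theorem, but since $\Psi$ is open a compactness argument is unavailable, so the uniform bound on $\|\bJ(\balpha^v,\bg)^{-1}\|$ must instead be read directly off the $\bg$-uniform Kantorovich estimates. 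Once that uniform inverse bound is in hand, the identity $\sigma_{min}=1/\|(\cdot)^{-1}\|_2$ closes the argument.
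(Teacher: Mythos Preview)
Your proposal is correct and matches the paper's approach: the paper's entire proof reads ``Immediate application of the Newton-Kantorovich Theorem.'' You have in fact supplied more detail than the paper does, particularly in spelling out the identification via Theorem~\ref{analytic:lemma1}, the uniformity in $\bg$ afforded by the $\bg$-independent constants of Assumption~\ref{regions:condition3}, and the extraction of the singular-value floor \eqref{regions:eqn1} from the inverse-norm bound; the paper leaves all of this implicit.
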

\begin{proof}
Immediate application of the Newton-Kantorovich Theorem.
\end{proof}

\begin{rem}
  From Assumptions \ref{regions:condition2} and
  \ref{regions:condition3} and from the fact that extended Newton
  iteration is a valid extension of the sequence \eqref{analytic:eqn1}
  then we have that $\lambda_{e} \geq \lambda$, $\varkappa_{e} \geq
  \varkappa$, $\delta_{e} \geq \delta$, $h_{e} \geq h$.  This
  implies that $t^{*}_{e} \geq t^{*}$ and therefore
  $U(\bx_{0},t^{*}) \subseteq U(\balpha_{0},t^{*}_{e})$ 
  for all $\bg \in \Psi$ (See Figure \ref{newtoniteration:extensionfigure}). 
  From the Newton-Kantorovich Theorem it follows that that 
  $\balpha^{v} \in U(\balpha_{0},t^{*}_{e})$ for all $v \in \bbN_0$.
\end{rem}

\begin{figure}[htp]
\begin{center}
\begin{tikzpicture}[scale = 2]
    \begin{scope}

    \filldraw[fill=blue!40, semitransparent]
    (0,0) ellipse (2 and 1);


    \filldraw[fill=blue!80, semitransparent, dashed]
    (0,0) circle (1);

    \draw [line width=2] (-1.5, 0) -- (1.5, 0) node [below left]  {$D$};
    
    \node at (0.2,-0.2) {$\bx_{0}$};

    \node at (-0.40,-0.4) {$U(\balpha_{0},t^{*}_{e})$};

    \node at (0.5,0.2) {$t^{*}$};
    \node at (-0.5,0.2) {$-t^{*}$};

    \draw [line width=1,dashed,->, >=latex]
      (0, 0) -- (-0.48, 0.87) node [above left]  {$t^{*}_e$};

    \draw [->] (-2.5, 0) -- (2.5, 0) node [above left]  {$\R^{m}$};
    \draw [->] (0,-1.5) -- (0,1.5) node [below right] {$ \R^{m}$};
    \draw (0.5,-3pt) -- (0.5,3pt)   node [above] {$$};
    \draw (-0.5,-3pt) -- (-0.5,3pt) node [above] {$$};
    \end{scope}
    
    \node [below right] at (1.50,1.25) {\Large{$\tilde D$}}; 
\end{tikzpicture}
\end{center}
\caption{Region of convergence $U(\balpha_0,t^{*}_e)$ for the extended
  Newton iteration.}
\label{newtoniteration:extensionfigure}
\end{figure}
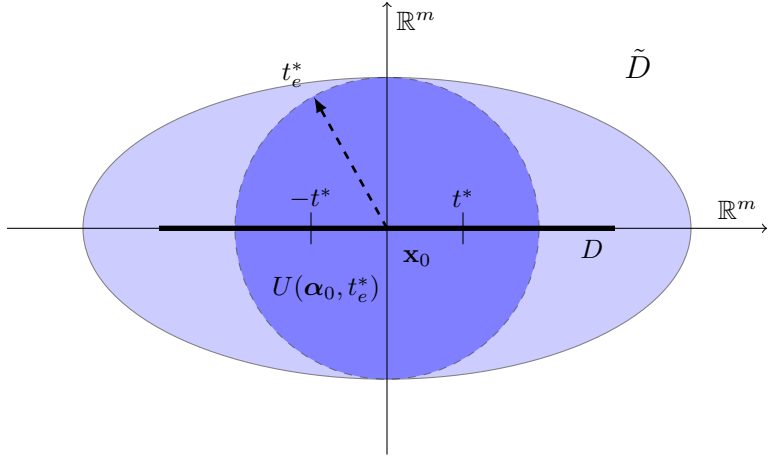

We can construct a region $\Psi$ such that for all $\bg \in \Psi$ the
extended Newton iteration converges. 
Let $\by =
\begin{bmatrix}
  \by_R
  \\
  \by_I
\end{bmatrix}
=
\begin{bmatrix} \Real{\bg}
  \\
  \Imag{\bg}
\end{bmatrix}
$, and apply the multivariate Taylor theorem for each $k=1,\dots,n$,
$l= 1,\dots,n$ entry of the Jacobian matrix
$\bJ_R(\balpha_{0},\bg)$. Evaluating $\bg$ at $\bqq + \bv$, we have
that
\[
  [\bJ_R(\balpha_0, \bqq + \bv_R, \0 + \bv_I) ]^{k,l}=
  [\bJ_R(\bx_{0},\bqq,\0) ]^{k,l} + \bR_{k,l}
  (\bx_0) ]
\begin{bmatrix}
\bv_{R} \\
\bv_{I} 
\end{bmatrix},
\]
$\bv_R := \Real{\bv}$, $\bv_I := \Imag{\bv}$, and
\[
\bR_{k,l}(\bx_{0}) = [
  R^{1}_{k,l}(\bx_{0}), \dots,
  R^{2m}_{k,l}(\bx_{0})
].
\]
The entries of the remainder term $\bR_{k,l}(\bx_{0})$ are bounded by
\[
|R^{\beta}_{k,l}(\bx_{0}) | \leq \max_{t \in (0,1)} \left|
\partial_{y_{\beta}} \left[\bJ_{R}
  \left(\bx_{0},
  \left[
    \begin{array}{c}
  \bqq \\
  \0
  \end{array}
  \right]
+ t
\left[\begin{array}{c}
  \bv_{R} \\
  \bv_{I}
  \end{array}
  \right]\right)
\right]^{k,l}\right|,
\]
where $\partial_{y_{\beta}}$ refers to the derivative of the
$\beta^{th}$ variable of the vector $\by$. Form the matrix
\[
\bE  :=
\begin{bmatrix*}
\0 &
-\bJ_I(\bx_{0},\bqq,\bv_R,\bv_I) \\
\bJ_I(\bx_{0},\bqq,\bv_R,\bv_I) & \0 \\
\end{bmatrix*}
+
\begin{bmatrix*}[c]
\bQ(\bx_{0},\bqq,\bv_R,\bv_I) & \0 \\
\0  & \bQ(\bx_{0},\bqq,\bv_R,\bv_I) \\
\end{bmatrix*}
\]
and let
\[
\bQ_{k,l}(\bx_{0},\bqq,\bv_R,\bv_I)
= \bR
_{k,l}(\bx_{0})
\begin{bmatrix}
\bv_{R} \\
\bv_{I} 
\end{bmatrix}
\]
be the $k= 1,\dots,n$, $l = 1,\dots,n$ entry of the matrix $\bQ$.  Then
\[
\begin{bmatrix*}[c]
  \bJ_R(\bx_0,\bqq)
  & -\bJ_I(\bx_{0},\bqq,\bv_R,\bv_I) \\ \bJ_{I}(\bx_{0},\bqq,\bv_R,\bv_I)
  & \bJ_R(\bx_0,\bqq) \\
\end{bmatrix*}
=
\bbJ
+ \bE
=
\bbJ(\bI + \bbJ^{-1}\bE),
\]
where $\bbJ :=
\begin{bmatrix*}[c]
  \bJ_R(\bx_0,\bqq)    & \0  \\
  \0  & \bJ_R(\bx_0,\bqq)  \\
\end{bmatrix*}$.
\begin{theo}
Suppose that $\varkappa_e \geq \varkappa$ and 
  \[
  \| \bE(\balpha^0,\bg) \| <
  \frac{1 -
    \frac{\varkappa}{\varkappa_e}
  }{\varkappa}
\]
whenever $\bg \in \Psi$ then
\[
\| \bJ(\balpha^0,\bg)^{-1} \| \leq \varkappa_{e}.
\]
\label{regions:theorem1}
\end{theo}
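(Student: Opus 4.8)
The plan is to exploit the factorization $\bJ(\balpha^0,\bg)=\bbJ(\bI+\bbJ^{-1}\bE)$ established immediately before the statement, combined with a Neumann series. First I would record the norm of $\bbJ^{-1}$. Since $\bbJ=\diag(\bJ_R(\bx_0,\bqq),\bJ_R(\bx_0,\bqq))$ is block diagonal, so is its inverse, $\bbJ^{-1}=\diag(\bJ_R(\bx_0,\bqq)^{-1},\bJ_R(\bx_0,\bqq)^{-1})$, and therefore $\|\bbJ^{-1}\|=\|\bJ_R(\bx_0,\bqq)^{-1}\|$ in the induced operator norm. The key observation is that the base point $(\bx_0,\bqq)$ is real, so the analytic extension coincides there with the original real map and $\bJ_I$ vanishes; hence $\bJ_R(\bx_0,\bqq)=\bJ(\bx^0)$ and Assumption \ref{regions:condition1} yields $\|\bbJ^{-1}\|=\|\bJ(\bx^0)^{-1}\|\le\varkappa$.

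Next I would check convergence of the Neumann series for $(\bI+\bbJ^{-1}\bE)^{-1}$. By submultiplicativity and the previous bound,
\[
\|\bbJ^{-1}\bE\|\le\|\bbJ^{-1}\|\,\|\bE(\balpha^0,\bg)\|\le\varkappa\,\|\bE(\balpha^0,\bg)\|,
\]
and the hypothesis $\|\bE(\balpha^0,\bg)\|<(1-\varkappa/\varkappa_e)/\varkappa$ gives $\varkappa\|\bE(\balpha^0,\bg)\|<1-\varkappa/\varkappa_e<1$, using $\varkappa_e\ge\varkappa>0$. Consequently $\bI+\bbJ^{-1}\bE$ is invertible and $\|(\bI+\bbJ^{-1}\bE)^{-1}\|\le(1-\|\bbJ^{-1}\bE\|)^{-1}\le(1-\varkappa\|\bE(\balpha^0,\bg)\|)^{-1}$.

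Finally I would assemble the estimate. From the factorization, $\bJ(\balpha^0,\bg)^{-1}=(\bI+\bbJ^{-1}\bE)^{-1}\bbJ^{-1}$, so
\[
\|\bJ(\balpha^0,\bg)^{-1}\|\le\frac{\|\bbJ^{-1}\|}{1-\varkappa\|\bE(\balpha^0,\bg)\|}\le\frac{\varkappa}{1-\varkappa\|\bE(\balpha^0,\bg)\|}.
\]
Rearranging the hypothesis as $1-\varkappa\|\bE(\balpha^0,\bg)\|>\varkappa/\varkappa_e$ and taking reciprocals of the positive quantities shows $\varkappa/(1-\varkappa\|\bE(\balpha^0,\bg)\|)<\varkappa_e$, which gives the claimed bound $\|\bJ(\balpha^0,\bg)^{-1}\|\le\varkappa_e$. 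I expect the only real subtlety to be the identification $\|\bbJ^{-1}\|\le\varkappa$: one must argue that the real part of the complexified Jacobian reduces to the original real Jacobian at the real base point, so that Assumption \ref{regions:condition1} applies, and that passing to the $2m\times2m$ block-diagonal form does not enlarge the norm. Everything else is a routine Neumann estimate.
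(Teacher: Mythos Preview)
Your proposal is correct and follows essentially the same route as the paper: factor $\bJ(\balpha^0,\bg)=\bbJ(\bI+\bbJ^{-1}\bE)$, invoke the Neumann series bound (the paper cites it as Lemma~2.2.3 in \cite{Golub1996}) to control $(\bI+\bbJ^{-1}\bE)^{-1}$, use $\|\bbJ^{-1}\|\le\varkappa$ from Assumption~\ref{regions:condition1}, and rearrange the hypothesis to conclude. Your explicit justification that the block-diagonal $\bbJ$ has $\|\bbJ^{-1}\|=\|\bJ(\bx^0)^{-1}\|\le\varkappa$ is in fact more careful than the paper, which simply asserts this bound.
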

\begin{proof}
First note that
\begin{equation}
\|\bJ(\balpha_0,\bg)^{-1}\| \leq
\| (\bbJ(\bI + \bbJ^{-1}\bE))^{-1}\| \leq
\| \bbJ^{-1} \| \| (\bI + \bbJ^{-1}\bE)^{-1} \|.
\label{regions:eqn4}
\end{equation}
From Lemma 2.2.3 in \cite{Golub1996}, if $\| \bbJ^{-1} \bE \|_{2} < 1$
then $(\bI + \bbJ^{-1}\bE)$ is invertible and
\[
\|(\bI + \bbJ^{-1}\bE)^{-1}\| < \frac{1}{1 - \|\bbJ^{-1}\bE \|}.
\]
Given that $\|\bbJ(\bx_0,\bqq)^{-1}\| \leq \varkappa$ (From Assumption
\ref{regions:condition1} ) whenever $\bqq \in \Gamma$, it follows
\begin{equation}
  \| \bbJ^{-1} \| \|\bI + \bbJ^{-1}\bE\| <
  \frac{\varkappa}{1 - \|\bbJ^{-1}\bE \|}
  \,\,\,\mbox{and}\,\,\,
\| \bbJ^{-1}\bE \| \leq \|\bbJ^{-1}\| \|\bE \| \leq
\varkappa  \|\bE \|.
\label{regions:eqn5}
\end{equation}
We conclude that if
\[
\| \bE(\balpha^0,\bg) \| < \frac{1 -
    \frac{\varkappa}{\varkappa_e}
}{\varkappa}
\]
whenever $\bg \in \Psi$, then from Equations
\eqref{regions:eqn4} and \eqref{regions:eqn5}
\[
\| \bJ(\balpha^0,\bg)^{-1} \| \leq \varkappa_{e}.
\]
\end{proof}
Applying the multivariate Taylor's theorem for each $k=1,\dots,n$,
entry of the vector $\bbf_R(\balpha_{0},\bg)$ where $\bg = \bqq +
\bv$, we have 
\[
  [\bbf_R(\balpha_0, \bqq + \bv_R, \0 + \bv_I) ]^{k}=
  \left[\bbf_R(\bx_{0},\bqq,\0) ]^{k} + \bS_k
  (\bx_0,\bqq,\0) \right]
\begin{bmatrix}
\bv_{R} \\
\bv_{I} 
\end{bmatrix},
\]
where $\bv_R := \Real{\bv}$, $\bv_I := \Imag{\bv}$, and
\[
\bS_k(\bx_{0},\bqq,\0) = [
  S^{1}_{k}(\bx_{0},\bqq,\0), \dots,
  S^{2m}_{k}(\bx_{0},\bqq,\0)
].
\]
The remainder term $\bS_{k}(\bx_{0},\bqq)$ is bounded by
\[
|S^{\beta}_{k}(\bx_{0},\bqq,\0) | \leq \max_{t \in (0,1)} \left|
\partial_{y_{\beta}} \left[\bbf_{R} \left(
\begin{bmatrix}
\bqq \\
\0  
\end{bmatrix}
+
\begin{bmatrix}
\bv_{R} \\
\bv_{I} 
\end{bmatrix}
\right)
\right]^{k}\right|,
\]
where $\partial_{y_{\beta}}$ refers to the derivative of the $\beta^{th}$
variable of $\by$. We can now rewrite the vector
$\bbf(\balpha_0,\bg)$ as
\[
\bbf(\balpha_0,\bg) = \bbF(\bx_0,\bqq) + \bG(\bx_{0},\bqq,\bv_R,\bv_I),
\]
where $\bbF :=
\begin{bmatrix*}[c]
  \bbf_R(\bx_0,\bqq)  \\
  \0  \\
\end{bmatrix*}$, 
$\bG  :=
\begin{bmatrix*}
\bP(\bx_{0},\bqq,\bv_R,\bv_I) \\
\bbf_I(\bx_{0},\bqq,\bv_R,\bv_I) \\
\end{bmatrix*}$
and 
\[
\bP_{k}(\bx_{0},\bqq,\bv_R,\bv_I)
= \bS
_{k}(\bx_{0},\bqq,\0) 
\begin{bmatrix}
\bv_{R} \\
\bv_{I} 
\end{bmatrix}.
\]
\begin{theo}
  Suppose that $\varkappa_e \geq \varkappa$ and
  $\delta_e \geq \delta$. Then if
  \[
  \| \bG(\balpha^0,\bg) \| <
  \frac{\delta_e}{\varkappa_e}
  -
  \frac{\delta}{\varkappa}
\]
whenever $\bg \in \Psi$, it follows
\[
\| \bJ(\balpha_0,\bg)^{-1} \bbf(\balpha_0,\bg) \| \leq \delta_e.
\]
\label{regions:theorem2}
\end{theo}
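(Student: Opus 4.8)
The plan is to exploit the additive splitting $\bbf(\balpha_0,\bg) = \bbF(\bx_0,\bqq) + \bG(\bx_0,\bqq,\bv_R,\bv_I)$ already established, together with the multiplicative factorization $\bJ(\balpha_0,\bg) = \bbJ(\bI + \bbJ^{-1}\bE)$, and to bound $\|\bJ(\balpha_0,\bg)^{-1}\bbf(\balpha_0,\bg)\|$ by treating the two summands separately. Writing $\bJ(\balpha_0,\bg)^{-1}\bbf(\balpha_0,\bg) = \bJ(\balpha_0,\bg)^{-1}\bbF + \bJ(\balpha_0,\bg)^{-1}\bG$ and applying the triangle inequality reduces the problem to estimating $\|\bJ^{-1}\bbF\|$ and $\|\bJ^{-1}\bG\|$ individually. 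The guiding intuition is that $\bbF$ is the \emph{unperturbed} residual, depending only on $\bx_0$ and the real parameter $\bqq$, so its contribution should be controlled by $\delta$ up to the distortion induced by the complex perturbation, whereas $\bG$ is the genuinely new term whose smallness is exactly the hypothesis we are handed.

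First I would dispose of the second term. Since the hypotheses here are meant to be imposed together with those of Theorem \ref{regions:theorem1} in the construction of $\Psi$, I would invoke that theorem to obtain $\|\bJ(\balpha_0,\bg)^{-1}\| \leq \varkappa_e$ for every $\bg \in \Psi$, whence $\|\bJ(\balpha_0,\bg)^{-1}\bG\| \leq \varkappa_e\|\bG\|$. For the first term I would use the factorization $\bJ^{-1} = (\bI + \bbJ^{-1}\bE)^{-1}\bbJ^{-1}$, so that $\bJ^{-1}\bbF = (\bI + \bbJ^{-1}\bE)^{-1}(\bbJ^{-1}\bbF)$. The block structure of $\bbJ$ (block-diagonal with diagonal blocks $\bJ_R(\bx_0,\bqq)$) and of $\bbF$ (top block $\bbf_R(\bx_0,\bqq)$, bottom block $\0$) makes $\bbJ^{-1}\bbF$ the stacked vector with top block $\bJ_R(\bx_0,\bqq)^{-1}\bbf_R(\bx_0,\bqq)$ and bottom block $\0$; since at the real parameter $\bqq$ these coincide with $\bJ(\bx_0)^{-1}\bbf(\bx_0)$, Assumption \ref{regions:condition1} yields $\|\bbJ^{-1}\bbF\| \leq \delta$.

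It then remains to control $\|(\bI + \bbJ^{-1}\bE)^{-1}\|$: reusing the estimate inside the proof of Theorem \ref{regions:theorem1}, the condition $\|\bE\| < (1 - \varkappa/\varkappa_e)/\varkappa$ (which holds on $\Psi$) forces $1 - \varkappa\|\bE\| > \varkappa/\varkappa_e$, and the Neumann-series bound from Lemma 2.2.3 of \cite{Golub1996} then gives $\|(\bI + \bbJ^{-1}\bE)^{-1}\| \leq 1/(1 - \varkappa\|\bE\|) < \varkappa_e/\varkappa$. Combining the last two estimates, $\|\bJ^{-1}\bbF\| < (\varkappa_e/\varkappa)\,\delta$. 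Adding the two contributions gives $\|\bJ^{-1}\bbf\| < (\varkappa_e/\varkappa)\delta + \varkappa_e\|\bG\|$, and substituting the hypothesis $\|\bG\| < \delta_e/\varkappa_e - \delta/\varkappa$ collapses the right-hand side to exactly $\delta_e$, which is the claim.

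The main obstacle is the first term. Because $\bbF$ is frozen at the real parameter while $\bJ$ itself varies with $\bg$, one cannot apply the $\delta$-bound of Assumption \ref{regions:condition1} directly to $\bJ^{-1}\bbF$; the resolution is precisely the factorization above, which isolates the unperturbed quantity $\bbJ^{-1}\bbF$ (bounded by $\delta$) from the perturbation factor $(\bI + \bbJ^{-1}\bE)^{-1}$ (bounded by $\varkappa_e/\varkappa$ through Theorem \ref{regions:theorem1}). A minor consistency point worth recording is that the two hypotheses are genuinely compatible, i.e. that the admissible bound $\delta_e/\varkappa_e - \delta/\varkappa$ is positive, which holds exactly when $\delta_e/\varkappa_e > \delta/\varkappa$.
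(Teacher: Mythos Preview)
Your proposal is correct and follows essentially the same approach as the paper: both arguments use the factorization $\bJ(\balpha_0,\bg)^{-1}=(\bI+\bbJ^{-1}\bE)^{-1}\bbJ^{-1}$, bound $\|\bbJ^{-1}\bbF\|\le\delta$ via the block structure and Assumption~\ref{regions:condition1}, bound $\|(\bI+\bbJ^{-1}\bE)^{-1}\|<\varkappa_e/\varkappa$ via the Neumann-series estimate from Theorem~\ref{regions:theorem1}, and arrive at $\|\bJ^{-1}\bbf\|<(\varkappa_e/\varkappa)\delta+\varkappa_e\|\bG\|$. The only cosmetic difference is that for the $\bG$ term you invoke the conclusion $\|\bJ^{-1}\|\le\varkappa_e$ of Theorem~\ref{regions:theorem1} directly, whereas the paper re-applies the factorization and uses $\|\bbJ^{-1}\|\le\varkappa$ together with the Neumann bound---both routes yield the identical estimate $\varkappa_e\|\bG\|$.
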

\begin{proof}
For each of the entries $k= 1,\dots,n$ of the vector $\bP$.
\begin{equation}
\begin{split}
\| \bJ(\balpha_0,\bg)^{-1} \bbf(\balpha_0,\bg) \|
&=
\|
(\bbJ + \bE)^{-1}
(\bbF +
\bG
) \|
\leq
\|
(\bI + \bbJ^{-1}\bE)\bbJ^{-1}
(\bbF +
\bG
) \| \\
& \leq
\|
(\bI + \bbJ^{-1}\bE)\bbJ^{-1}
\bbF \| +
\|
(\bI + \bbJ^{-1}\bE)\bbJ^{-1}
\bG\| \\
& \leq
\|
\bI + \bbJ^{-1}\bE \| \|\bbJ^{-1}
\bbF \| +
\|
\bI + \bbJ^{-1}\bE \|
\|
\bbJ^{-1} \|
\|
\bG\| \\
& \leq
\|
\bI + \bbJ^{-1}\bE \| \delta +
\|
\bI + \bbJ^{-1}\bE \| \|
\bG\|\varkappa. 
\end{split}
\label{regions:eqn6}
\end{equation}
Since $\| \bE \|_{2} < \frac{1 - \frac{\varkappa}{\varkappa_e}
}{\varkappa} < \varkappa$ (from Theorem \ref{regions:theorem1}) from
Lemma 2.2.3 in \cite{Golub1996} it follows that $(\bI + \bbJ^{-1}\bE)$
is invertible and
\begin{equation}
\|(\bI + \bbJ^{-1}\bE)^{-1}\| < \frac{1}{1 - \|\bbJ^{-1}\bE \|}
\leq \frac{\varkappa_e}{\varkappa}.
\label{regions:eqn7}
\end{equation}
Combining e
quations \eqref{regions:eqn6} and \eqref{regions:eqn7} we
have \[
\begin{split}
\| \bJ(\balpha_0,\bg)^{-1} \bbf(\balpha_0,\bg) \|
&<
\frac{\varkappa_e}{\varkappa}
( \delta + \|\bG(\bx_{0},\bqq,\bv_R,\bv_I)\| \varkappa ).
\end{split}
\]
The result follows.
\end{proof}
From the values of $\varkappa,\delta,\lambda$ and
$\varkappa_e,\delta_e,\lambda_e$ and Theorems \ref{regions:theorem1} and
\ref{regions:theorem2}, the region of analyticity $\Psi$ can be
constructed.  From this region, convergence rates
from Theorem \ref{erroranalysis:theorem1} for the sparse grid
interpolation can be estimated. If we are interested in
forming a sparse grid for each of the entries of the vector $\bx^{v}
\in \R^{n}$, then there are potentially $n$ sparse grids. To estimate
the convergence rate of the sequence of sparse grids it is sufficient to embed a
polydisk $\mcE_{\sigma_1, \dots, \sigma_N} \subset \Psi$ for a
suitable set of coefficients $\{ \sigma_1 \dots, \sigma_N
\}$. Furthermore since $\balpha^{v} \in U(\alpha_0, t^*_e)$, the
maximal coefficient $\tilde M(\bz^{v})$ can be bounded as
\[
\tilde{M}(\bz^{v}) \leq t^{*}_e + \|\bx_0\|_{l^{2}(\R^{2m})}.
\]

\section{Application to power flow}
\label{Applications}

The theory developed in Section \ref{Analyticity} can be applied to
the computation of the statistics of stochastic power flow. In
particular we concentrate on the random 
perturbations of the generators, loads  and admittance uncertainty of 
the transmission lines. Much of the power system network model presented 
in this section is based on \cite{Bergen2000}.

Consider a network with $m+1$ mechanical constant power generators. The
electrical power injected into the network at each generator is given
by
\begin{equation}
P_{G_k} = \sum_{l = 0}^{m} V_kV_lsin(\theta_k - \theta_l +
\varphi_{k,l})|Y_{k,l}|,
\label{PF:eqn1}
\end{equation} where the operands of the summation are the power from bus $k$ transmitted to bus $l$ through a line with
admittance $Y_{k,l}=G_{k,l} + iB_{k,l}$, phase shift $\varphi_{k,l}$,
and voltage $V_k$ at the buses. These form the algebraic constraints
of the power system.  The dynamic constraints at generator $k$ are
given by
\begin{equation}
M_k \ddot{\theta_{i}} + D_k \dot{\theta_{i}} + P_{G_k} = P_{M_k} +
P_{I_k}(\omega) + P_{L_k}(\omega)
\label{PF:eqn2}
\end{equation}
where $M_k$ is the moment of inertia of generator $i$, $D_k$ is the
damping factor, $P_{M_k}$ denotes the mechanical power,
$P_{L_k}(\omega)$ is the stochastic load and $P_{I_k}(\omega)$ is the
intermittent stochastic power applied to bus $i$. Equations
\eqref{PF:eqn1} and \eqref{PF:eqn2} constitute the {\it swing equation
  model}. Since the intermittent power generators and loads are
stochastic, the rotor angle $\theta_k(\omega)$ and power generation
$P_{G_k}(\omega)$ will be stochastic as well. A simple example of a 3
bus power system is shown in Figure \ref{example1:fig1}.  From the
steady state response the {\it power flow equations} are given by
\[
\begin{split}
  P_k(\bx)
  &=
  \sum_{l=0}^{m} V_k V_l[G_{ik}cos(\theta_k - \theta_l) +
    B_{ik} sin(\theta_k - \theta_l)]
  \\
    Q_k(\bx)
  &=
  \sum_{l=0}^{m} V_kV_l[G_{ik}sin(\theta_k - \theta_l) +
    B_{ik} cos(\theta_k - \theta_l)]
\end{split}
\]
for $k = 0,\dots,m$.

It is assumed that at each node the active and reactive power injections
(or loads) are given by $P_1, \dots, P_{m}$ and $Q_1, \dots,
Q_{m}$. The first bus is assumed to be slack bus with known angle
$\theta_0 = 0$ and fixed voltage $V_0$.

\begin{rem}
  According to power system convention, the numbering of
  the buses (nodes) starts with 1 instead of 0. To simplify the
  notation in this section we start from 0. However, for the
  examples and numerical results we revert to the power system
  standard.
\end{rem}

In this paper we limit our discussion of power flow to the case where
the power injections $P_1, \dots, P_{m}$ and $Q_1, \dots, Q_{m}$ are
assumed to be known, but could be stochastic. The unknowns are formed
by the angles $\theta_1,\dots,\theta_{m}$ and voltages $V_1,
\dots,V_{m}$. The power flow equations are solved with a Newton
iteration and posed as
\[
\btheta :=  
\begin{bmatrix}
  \theta_1\\
  \vdots \\
  \theta_{m}
\end{bmatrix},
\bV:=
\begin{bmatrix}
  V_1 \\
  \vdots \\
  V_{m}
\end{bmatrix},
\bx:=
\begin{bmatrix}
  \btheta \\
  \bV
\end{bmatrix},
\bff(\bx) =
\begin{bmatrix}
  \Delta P(\bx)
  \\
  \Delta Q(\bx)
\end{bmatrix},
\]
where
\[
\Delta P(\bx)
:=
\left[
  \begin{array}{c}
    P_1(\bx) - P_1 \\
    \vdots \\
    P_{m}(\bx) - P_{m}
    \end{array}
  \right]
\,\,\,\mbox{and}\,\,\,
\Delta Q(\bx) :=
\left[
  \begin{array}{c}
    Q_1(\bx) - Q_{1} \\
    \vdots \\
    Q_{m}(\bx) - Q_{m}\\
    \end{array}
  \right].
\]
The Jacobian matrix is given in block form as
\[
\bJ =
\begin{bmatrix}
  \bJ_{11} & \bJ_{12}
  \\
  \bJ_{21} & \bJ_{22}
\end{bmatrix},
\]
where $\bJ_{11}, \bJ_{12}, \bJ_{21},\bJ_{22} \in \R^{n \times n}$. For
$k,l = 1, \dots, m$ let $\theta_{k,l} := \theta_{k} - \theta_{l}$ and
if $k \neq l$
\[
\begin{array}{ll}
  \bJ^{11}_{k,l} = \,\,\,\,\,V_kV_l G_{k,l} sin(\theta_{k,l})
  - B_{k,l}(cos(\theta_{k,l})),
  &\bJ^{21}_{k,l} = -V_kV_l G_{k,l} cos(\theta_{k,l})
  + B_{k,l}(sin(\theta_{k,l})),\\
  \bJ^{12}_{k,l} = \,\,\,\,\,V_kG_{k,l} cos(\theta_{k,l})
  + B_{k,l}(sin(\theta_{k,l})),
  &\bJ^{22}_{k,l} = \,\,\,\,\,V_kG_{k,l} sin(\theta_{k,l})
  - B_{k,l}(cos(\theta_{k,l})),
  \end{array}
\]
otherwise
\[
\begin{array}{ll}
  \bJ^{11}_{k,k} = -Q_k(\bx) - B_{k,k}V_k^2 &
  \bJ^{21}_{k,k} =  \,\,\,\,\,P_k(\bx) - G_{k,k}V_k^2 \\
  \bJ^{12}_{k,k} =  \,\,\,\frac{P_k(\bx)}{V_k} + G_{k,k}V_k &
  \bJ^{22}_{k,k} =  \,\,\,\frac{Q_k(\bx)}{V_k} - B_{k,k}V_k
  \end{array}.
\]
It is clear from the structure of $\bbf$ and the Jacobian $\bJ$ that
they are analytic everywhere except for $V_k = 0$, for $k =1,\dots,m$.
However, in practice the domain $\Theta_0$ is chosen such the origin
is avoided. Otherwise the analyticity assumptions of Theorem
\ref{analytic:theorem1} are not satisfied.

There are many forms of uncertainty that can be present in the solution
of the power flow equations. We concentrate on the following cases:
\begin{itemize}
\item {\bf Random generators and loads:} The power injections $P_k$ and
  $Q_k$, $k = 1,\dots,m$, will be a function of the random vector
  $\bqq \in \Gamma$:
\[
  P_{k} + iQ_{k} = P^{0}_k(1 + c_k q_k) + Q^{0}_k(1 + c_{k+1} q_{k+1}),
  \]
where $P^{0}_k$ and $Q^{0}_k$ are the nominal power loads (or generators), $q_k \in
[-1,1]$ and $c_k,c_{k+1} \in \R$.

\item {\bf Random admittances:} 
The transmission line admittances
  $Y_{k,l}$ will be functions of the random
  vector $\bqq \in \Gamma$.  Let $\mcA$ be
the set of network index tuples $(k,l)$ such that the admittance is
stochastic. Thus for all $k,l \in \mcA$ let 
 \[
 Y_{k,l} =  G_{k,l} + iB_{k,l} = G^{0}_{k,l}(1 + 
 c_{k,l,1} 
 q_{k,l,1}
 ) + B^{0}_{k,l}
 (1 + 
 c_{k,l,2}  q_{k,l,2} 
 ),
 \]  
 where $G^{0}_{k,l}$ and $B^{0}_{k,l}$ are the nominal conductance and
 susceptance, $q_{k,l,1}, q_{k,l,2} \in [-1,1]$
and  $c_{k,l,1}, c_{k,l,2} \in \R$. Note that with a slight of abuse of
notation the vector $\bqq$ consists of all the stochastic random variables
$\{q_{k,l,1}, q_{k,l,2}\}_{(k,l) \in \mcA}$.


 \end{itemize}

For sufficiently small coefficients $c_{k}, c_{k+1}$ with $k = 1, \dots, m$ and
$c_{k,l,1}, c_{k,l,2}$ for all tuples $(k,l) \in \mcA$ the assumptions
of Theorems \ref{regions:theorem3}, \ref{regions:theorem1} and \ref{regions:theorem2}
are satisfied for some initial condition $\bx_0$ and thus we can justify the use of 
the sparse grids.  A more detailed analysis
of the size of these coefficients is left for a future analysis emphasizing the details of
power systems.
 
\begin{figure}[htp]
\begin{center}
\begin{tikzpicture}
\tikzset{cos v source/.style={ circle, draw, scale=1.5, } }

\tikzset{sin v source/.style={
  circle,
  draw,
  append after command={
    \pgfextra{
    \draw
      ($(\tikzlastnode.center)!0.5!(\tikzlastnode.west)$)
       arc[start angle=180,end angle=0,radius=0.425ex] 
      (\tikzlastnode.center)
       arc[start angle=180,end angle=360,radius=0.425ex]
      ($(\tikzlastnode.center)!0.5!(\tikzlastnode.east)$) 
    ;
    }
  },
  scale=1.5,
 }
}
  \draw
(0,0) node [sin v source] (v1) {} 
(v1.north)--++(0,0.5) coordinate(v1-up) 
(v1-up)--++(-0.5,0) 
(v1-up)--++(2,0) node[right]{$1$} 
coordinate[pos=0.5](v1-r) 
coordinate[pos=0.75](v1-rr) 
(v1-rr)--++(0,-0.25)--++(5,0)--++(0,0.25) 
coordinate(v2-ll) 
(v2-ll)--++(-0.5,0) node[left]{$3$} 
(v2-ll)--++(2,0) 
coordinate[pos=0.25](v2-l) 
coordinate[pos=0.75](v2-up) 
(v1-r)--++(0,-0.25)--++(2.625,-2)--++(0,-0.25) 
coordinate(v3-l) 
(v3-l)--++(-0.25,0) 
(v3-l)--++(1,0) 
coordinate[pos=0.375](v3-up) 
coordinate[pos=0.75](v3-r) 
(v3-up) --++(0,-0.25) node[below,cos v source]{} 
(v3-r) --++(0,0.25)--++(2.625,2)--(v2-l) 
;
\draw[-stealth](v2-up)--++(0,-0.5cm); 
\node at (0.5,-.5) [] {$S_{G}$};
\node at (6,-1.5) [] {$V_2 = 1.05$};
\node at (-1,0.98) [] {$1 \phase{0^{\circ}}$};
\node at (8.5,-.5) [] {$\begin{matrix*}[l]
      S_{L_2} &= P_{L} + iQ_{L} \\
      &=(c_2q_2 + 2.8653) \\
      &+ i(c_3
      q_3 + 1.2244)
      \end{matrix*}
      $};
\node at (6,-2.5) [] {$P_{E}=c_1q_1 + 0.6661$};
\node at (4,-1.5) [] {$2$};
\end{tikzpicture}
\end{center}
\caption{2 generators, 3 buses, 1 load simple power system example. This
figure is modified from \cite{Fiandrino2013}. Bus 1 is the slack
bus. Bus 2 contains a stochastic generator. Bus 3 contains the random
load. Note that volatages and power flows are in {\it p.u.}}
\label{example1:fig1}
\end{figure}
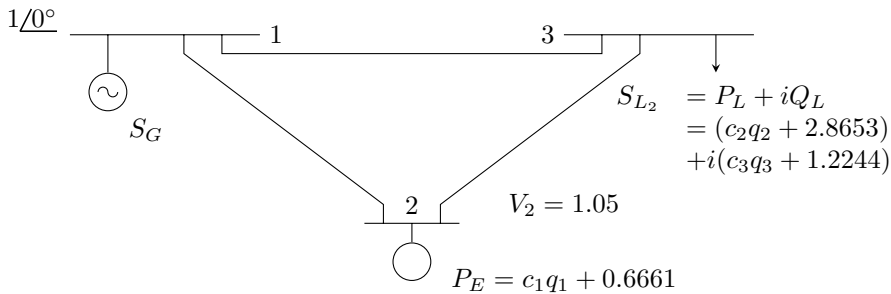


We test the sparse grid approximation on the New England 39 Bus, 10
Generator, power system model provided from the Matpower 6.0 steady
state simulator \cite{Zimmerman2011, Murillo2013}. In this model buses
1 - 29 are PQ buses, buses 30, 32-39 are generators and bus 31 is the
reference (slack).

Two tests are performed. We randomly perturb either the loads or the
admittances of the transmission lines. The mean and variance of the
voltage $V_{22}$ at bus 22 are computed.  The mean
$\mathbb{E}[V_{22}]$ and variance $\var[V_{22}]$ are computed with the
Clenshaw-Curtis isotropic {\it Sparse Grid Matlab Kit}
\cite{Back2011,Tamellini2015} for $N = 2,4,12$ dimensions and up to the $w
= 7$ level. This last level, $w = 7$ is taken as the ``true'' solution.
The errors are computed up to level $w = 4$ with respect to this solution. Two tests are performed:

\begin{itemize}
\item {\bf Random loads:} The loads are considered stochastic and are
  perturbed by up to $\pm$ 50\% of their nominal value. For each $k =
  1,\dots, N$, the so-called $k^{th}$ PQ bus is stochastically perturbed as
  \[
  P_{k} + iQ_{k} = P^{0}_k(1 + \frac{q_k}{2}) + Q^{0}_k(1 + \frac{q_k}{2}),
  \]
  where $P^{0}_k$ and $Q^{0}_k$ are the nominal power loads, $q_k \in
  [-1,1]$, $\rho(q_k)$ has a uniform distribution and the random
  variables $q_1,\dots,q_N$ are independent. Note that although the
  load random perturbations are independent, the power flows will be
  dependent on all the random variables $q_1,\dots,q_N$.

  In Figure \ref{numericalresults:fig1} (a) \& (b) the mean and
  variance convergence error for the stochastic voltage $V_{22}$ of
  bus 22 are shown. A surrogate model based on the sparse grid
  operator is formed as $\mcS^{m,g}_{w} [V_{22}]$ with
  Clenshaw-Curtis abscissas. Each of the circles corresponds to a
  sparse grid $\mcS^{m,g}_{w}$ starting with level $w = 1$ up to
  level $w = 4$.  The y-axis corresponds to the error of the mean or
  variance. The x-axis is the number of sparse grid knots 
  needed to form the grid $\mcS^{m,g}_{w}$. The dimension of
  the sparse grid is given by $N = 2,4,12$.

  From Figure \ref{numericalresults:fig1} (a) \& (b) we observe
  that the error decreases faster than polynomially with respect to the
  number of knots $\eta$. As we increase the number $w$ of levels, sub-exponential convergence is achieved. This is much faster than
  the $\eta^{-\frac{1}{2}}$ convergence rate of the Monte Carlo
  method. However, as the number of dimensions $N$ increases the
  convergence rate of the sparse grid decreases, as predicted by
  Theorem \ref{erroranalysis:theorem1}. Moreover, if the level $w$ is
  not large enough then the error bound gives algebraic convergence.

\item {\bf Random transmission line admittances:} The admittances of the
  network are assumed to be random with
 \[
 Y_{k,l} =  G_{k,l} + iB_{k,l} = G^{0}_{k,l}(1 + \frac{q_{k,l,1}}{2}) + B^{0}_{k,l}
 (1 + \frac{q_{k,l,2}}{2}),
 \]  
 where $G^{0}_{k,l}$ and $B^{0}_{k,l}$ are the nominal conductance and
 susceptance.  The coefficients $q_{k,l,1}, q_{k,l,2} \in [-1,1]$ have a uniform
 distribution and are all independent. 
 Figure
 \ref{numericalresults:fig1} (c) \& (d) indicate sub-exponential
 convergence of the mean and variance of the voltage $V_{22}$ at bus
 22 for a sufficiently large number of knots. However, as the number
 of stochastic dimensions $N$ increases to 12, the
 convergence rate decreases and almost approaches polynomial
 convergence. From Theorem \ref{errorestimates:theorem} the sufficient
 condition $w > N / \log{2}$ leads to subexponential convergence. For
 $N = 12$ we have that $w$ has to be larger than $18$ to guarantee
 sub-exponential convergence. In Figures (c) \& (d) the largest level
 for $w$ is 4.
  \end{itemize}

\begin{figure}[htb]
  \centering
  \begin{tikzpicture}[thick,scale=0.9, every node/.style={scale=0.9}]
\node[inner sep=0pt] (russell) at (0,0)
    {\includegraphics[width=1.02\textwidth]{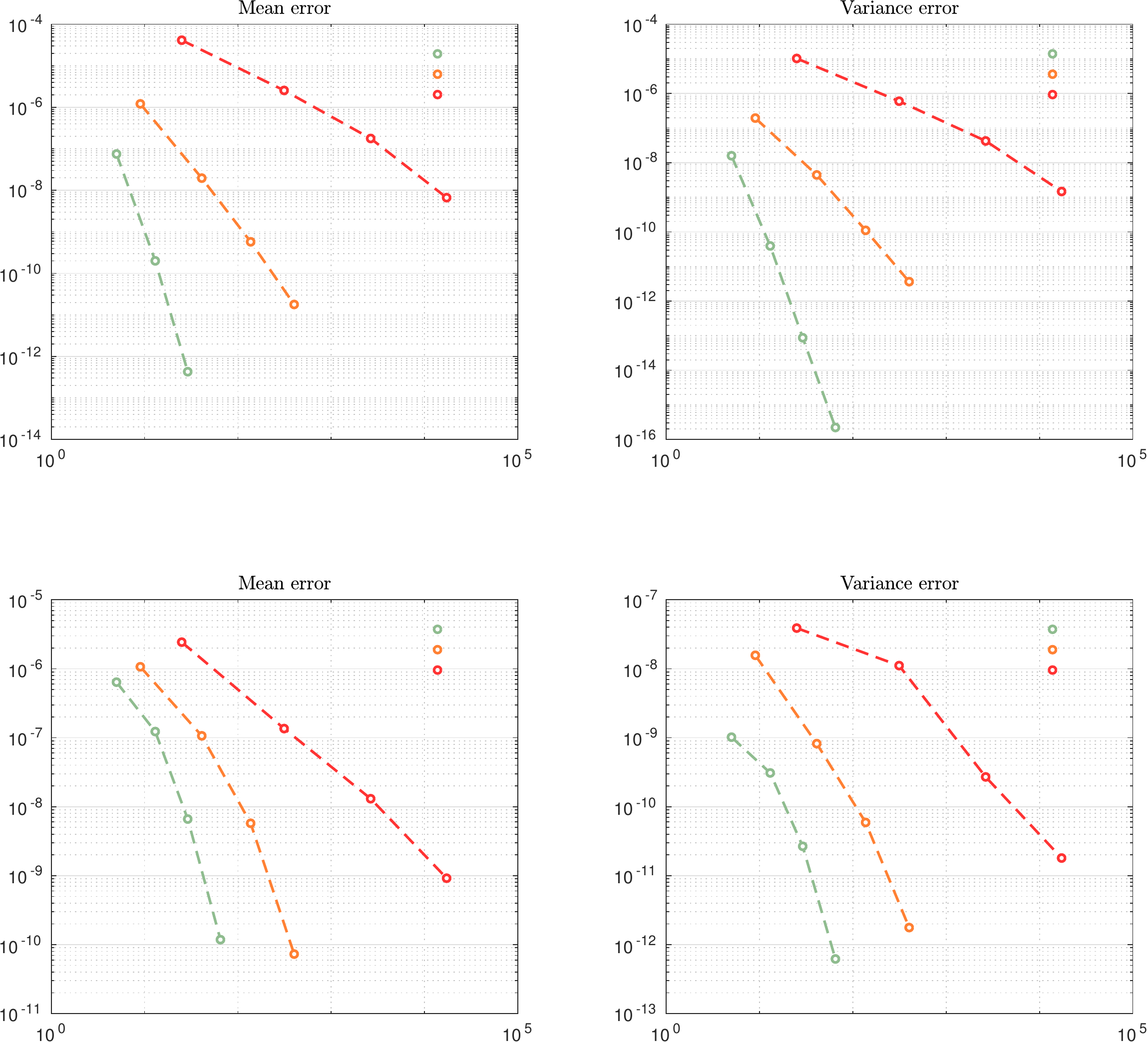}};
    \node [] at (-4.0,0.8) { $\eta$ (knots)};
    \node [] at (4.6,0.8) { $\eta$ (knots)};

    \node [] at (-4.0,-7.3) { $\eta$ (knots)};
    \node [] at (4.6,-7.3) { $\eta$ (knots)};

    \node [] at (-4.0,0.3) {(a)};
    \node [] at ( 4.6,0.3) {(b)};
    \node [] at (-4.0,-8.0) {(c)};
    \node [] at ( 4.6,-8.0) {(d)};

    \node [rotate=90] at ( -8.4,4.5) {$|\bbE[V_{22}] -
      \bbE[\mcS^{m,g}_{w}[V_{22}]]|$};

    \node [rotate=90] at ( 0,4.5) {$|\var[V_{22}] -
      \var[\mcS^{m,g}_{w}[V_{22}]]|$};

        \node [rotate=90] at ( -8.4,-4.5) {$|\bbE[V_{22}] -
      \bbE[\mcS^{m,g}_{w}[V_{22}]]|$};

    \node [rotate=90] at ( 0,-4.5) {$|\var[V_{22}] -
      \var[\mcS^{m,g}_{w}[V_{22}]]|$};

    \footnotesize
    
    \node [] at (-1.35,6.7) {$N  = 2$};
    \node [] at (-1.35,6.35) {$N  = 4$};
    \node [] at (-1.35,6.0) {$N = 12$};

    \node [] at (7.35,6.7) {$N = 2$};
    \node [] at (7.35,6.35) {$N = 4$};
    \node [] at (7.35,6.0) {$N = 12$};

    \node [] at (-1.35,-1.5) {$N =  2$};
    \node [] at (-1.35,-1.8) {$N =  4$};
    \node [] at (-1.35,-2.1) {$N = 12$};

    \node [] at (7.35,-1.5) {$N =  2$};
    \node [] at (7.35,-1.8) {$N =  4$};
    \node [] at (7.35,-2.1) {$N = 12$};

    \normalsize
\end{tikzpicture}
\caption{Sparse grid convergence rates. (a) \& (b) Mean and variance
  error of the voltage $V_{22}$ of bus 22 given a stochastic load
  perturbation with dimension $N$ and the number of knots of the
  sparse grid. (c) \& (d) Mean and variance of error of the voltage
  $V_{22}$ of bus 22 given a random admittance with dimension $N$. Notice that for all 4 cases
  the convergence rates are faster than polynomial, indicating a
  sub-exponential convergence rate.}
\label{numericalresults:fig1}
\end{figure}

\section{Conclusions}

In this paper we have introduced ideas from UQ and numerical analysis
for the solution of stochastic PDEs using the Newton iteration. In
particular we have developed a regularity analysis of the solution
with respect to the random perturbations. Under sufficient conditions
based on the Newton-Kantorovich Theorem there exists analytic
extensions of the solution of the Newton iteration.  These
indicate that the application of sparse grids for the computation of
the stochastic moments leads to sub-exponential or algebraic
convergence. For a moderate number of dimensions the convergence rates
are much faster than traditional Monte Carlo approaches
($\eta^{-\frac{1}{2}}$). In addition, numerical experiments applied to
the power flow problem confirm these subexponential and algebraic convergence rates.

A weakness in the application of the Newton-Kantorovich Theorem is
that is constricts the size of the region of analyticity $\Psi$, thus
leading to a conservative convergence rate of the sparse grid. This
motivates the application of less restrictive methods such as damped
Newton iterates \cite{Bank1982}. In addition, if we incorporate the 
assumption that all the Newton
iterations converge for each of the knots of the space grid, then by
developing an a posteriori method convergence rates can be further
improved.

Future work includes the important application of this method to the
security constrained problem \cite{Roald2013} from the probabilistic
perspective. In other words, given stochastic perturbations of the
loads and sources what are the optimal power injections into the grid
such that the probability of failure is below a tolerance
level. Current approaches rely on simplifications of the stochastic
perturbations to deal with the high dimensions. However, this can lead
to suboptimal results. The high dimensional stochastic quadrature
approach developed in this paper will allow more optimal results.



    





\bibliographystyle{plain}
\bibliography{multilevel,citations,securityconstrained}

\end{document}